\documentclass[12pt,a4paper]{article}

\pdfoutput=1
\usepackage{graphicx}
\graphicspath{{./Figures/}}
\usepackage{subfigure}

\usepackage{amssymb,latexsym}

\usepackage[nocompress]{cite}

\usepackage{setspace}

\usepackage{amsmath}

\usepackage{mathrsfs,amsfonts}

\usepackage{amsthm,amsxtra}

\usepackage[final]{showkeys}

\usepackage{stmaryrd}

\usepackage{algorithm}
\usepackage{algorithmicx}
\usepackage{algpseudocode}
\usepackage{mathtools}

\usepackage{yhmath}

\newif\ifPDF
\ifx\pdfoutput\undefined
\PDFfalse
\else
\ifnum\pdfoutput > 0
\PDFtrue
\else
\PDFfalse
\fi
\fi

\ifPDF
\usepackage{pdftricks}
\begin{psinputs}
	\usepackage{pstricks}
	\usepackage{pstcol}
	\usepackage{pst-plot}
	\usepackage{pst-tree}
	\usepackage{pst-eps}
	\usepackage{multido}
	\usepackage{pst-node}
	\usepackage{pst-eps}
\end{psinputs}
\else
\usepackage{pstricks}
\fi

\ifPDF
\usepackage[debug,pdftex,colorlinks=true, 
linkcolor=blue, bookmarksopen=false,
plainpages=false,pdfpagelabels]{hyperref}
\else
\usepackage[dvips]{hyperref}
\fi

\pdfminorversion=6

\usepackage[openbib]{currvita}

\usepackage{fancyhdr}


\newtheorem{theorem}{Theorem}[section]
\newtheorem{lemma}[theorem]{Lemma}
\newtheorem{definition}[theorem]{Definition}

\newtheorem{remark}[theorem]{Remark} 
\newtheorem{corollary}[theorem]{Corollary}


\newcommand{\dint}{\displaystyle\int}
\newcommand{\eps}{\varepsilon}






\newcommand{\bbR}{\mathbb R} \newcommand{\bbS}{\mathbb S}


 \newcommand{\bn}{\mathbf n}

\newcommand{\cA}{\mathcal A} \newcommand{\cB}{\mathcal B}
\newcommand{\cC}{\mathcal C} \newcommand{\cD}{\mathcal D} 
\newcommand{\cE}{\mathcal E} 
 \newcommand{\cH}{\mathcal H}
\newcommand{\cI}{\mathcal I} 
\newcommand{\cK}{\mathcal K} 
\newcommand{\cM}{\mathcal M} 
\newcommand{\cO}{\mathcal O} \newcommand{\cP}{\mathcal P} 
 
\newcommand{\cS}{\mathcal S} \newcommand{\cT}{\mathcal T}
 
 \newcommand{\cX}{\mathcal X}



\newcommand{\aver}[1]{\langle {#1} \rangle}

\setlength\textwidth{39pc}
\setlength\textheight{54pc}
\setlength\hoffset{0pc}
\setlength\voffset{0pc}
\setlength\topmargin{0pt}
\setlength\headheight{0pt}
\setlength\headsep{50pt}
\setlength\oddsidemargin{0in}
\setlength\evensidemargin{0in}

\setlength{\parskip}{1mm}

\newenvironment{keywords}
{\noindent{\bf Key words.}\small}{\par\vspace{1ex}}

\makeatletter
\newcommand{\chapterauthor}[1]{%
	{\parindent0pt\vspace*{-25pt}%
		\linespread{1.1}\large\scshape#1%
		\par\nobreak\vspace*{35pt}}
	\@afterheading%
}
\makeatother

\title{Forward and inverse problems of a semilinear transport equation}
\author{Kui Ren\thanks{Department of Applied Physics and Applied Mathematics, Columbia University, New York, NY 10027;
kr2002@columbia.edu}\and Yimin Zhong\thanks{Department of Mathematics and Statistics, Auburn University, Auburn, AL 36830;
yimin.zhong@auburn.edu}}
\date{}

\begin{document}
\maketitle

\begin{abstract}
We study forward and inverse problems for a semilinear radiative transport model where the absorption coefficient depends on the angular average of the transport solution. Our first result is the well-posedness theory for the transport model with general boundary data, which significantly improves previous theories for small boundary data. For the inverse problem of reconstructing the nonlinear absorption coefficient from internal data, we develop stability results for the reconstructions and unify an $L^1$ stability theory for both the diffusion and transport regimes by introducing a weighted norm that penalizes the contribution from the boundary region. The problems studied here are motivated by applications such as photoacoustic imaging of multi-photon absorption of heterogeneous media.
\end{abstract}
\begin{keywords}
semilinear radiative transport, inverse problems, diffusion approximation, fixed-point theorem, spectral analysis, photoacoustic imaging, multi-photon absorption 
\end{keywords}

\section{Introduction}

  
The objective of this work is to study forward and inverse problems for a radiative transport model with a nonlinear absorption mechanism, for applications in imaging modalities such as quantitative photoacoustic tomography. 

To introduce the mathematical model, let $\Omega\subset \bbR^d$ ($d\ge 2$) be a convex domain with smooth boundary $\partial \Omega$, and denote $\bbS^{d-1}$ the unit sphere in $\bbR^d$, $X := \Omega\times \bbS^{d-1}$, and $\Gamma_{\pm} = \{(x, v)\in\partial \Omega\times \bbS^{d-1} \mid \pm n(x)\cdot v > 0\}$, where $n(x)$ is the unit outer normal vector at $x\in\partial\Omega$. We consider the following transport equation for the nonlinear absorption process~\cite{KrLiReScZh-SIAM23,ReZh-SIAM21, StZh-SIAM22}:
\begin{equation}\label{eq: mpa-rte}
\begin{aligned}
    v\cdot \nabla u (x, v) + ( \Sigma_a(|\aver{u}|) + \Sigma_s(x) ) u(x, v) &= \Sigma_s(x)\cK u(x, v) \quad &&\text{ in } X, \\
    u(x, v) &= f_{-}(x, v)\quad  &&\text{ on }\Gamma_{-},
\end{aligned}
\end{equation}
where $\Sigma_a: L^{\infty}_{+}(\Omega)\mapsto L^{\infty}_{+}(\Omega)$ is a nonlinear functional and $\Sigma_s(x)\in L^{\infty}_{+}(\Omega)$ denote the nonlinear absorption operator and the scattering coefficients, respectively. 
$\aver{u}$ is the averaged density of $u(x, v)$ over the angular variable $v\in\bbS^{d-1}$, 
\begin{equation*}
    \aver{u} := \int_{\bbS^{d-1}}  u(x, v) dv 
\end{equation*}
with $dv$ being the normalized surface measure on $\bbS^{d-1}$. The absolute value in $|\aver{u}|$ is to ensure well-posedness.  $f_{-}(x, v) \ge 0$ is the incoming illumination source function on $\Gamma_{-}$. The linear operator $\cK$ is defined by 
\begin{equation*}
    \cK u(x, v) = \int_{\bbS^{d-1}} p(x, v, v') u(x, v') dv',
\end{equation*}
where $p(x, v', v) \ge 0$ is the scattering phase function at point $x$ such that
\begin{equation*}
    \int_{\bbS^{d-1}} p(x, v, v') dv' = \int_{\bbS^{d-1}} p(x, v, v') dv = 1.
\end{equation*}

Radiative transport equations similar to~\eqref{eq: mpa-rte} are useful in modeling the propagation of particles in heterogeneous media where the absorption strength of the media depends on the local density of the particles. Such a situation appears in the propagation of near-infrared photons in biological tissues with multiphoton absorption~\cite{BaReZh-JBO18,Mahr-QE12,RuPe-AOP10,YiLiAl-AO99}.

The first goal of this work is to improve existing well-posedness theory for the radiative transport model~\eqref{eq: mpa-rte}. Existing theory in~\cite{ReZh-SIAM21,StZh-SIAM22} assumes that the boundary datum $f_-$ is sufficiently small. This assumption significantly limits the applicability of the theory, as nonlinear absorption mostly happens when the density of the particles is very high. We will remove this smallness assumption in our theory in Section~\ref{SEC:Forw}.

In applications such as quantitative photoacoustic imaging of two-photon absorption~\cite{DeStWe-Science90,MoViDi-APL08,XuWaXuZhYi-OC21,YaNaTa-OE11,YeKo-OE10}, one is interested in reconstructing the absorption property of~\eqref{eq: mpa-rte} from data related to its solutions. The second goal of this work is to analyze the problem of reconstructing a generic multi-photon absorption model where $\Sigma_a$ is a homogeneous polynomial of the local density $\aver{u}$ from the total absorbed energy data. We will develop some stability theories for the inverse problem in Section~\ref{SEC:IP Abso}, and then unify the stability theories of the diffusion regime and the transport regime in Section~\ref{Sec: trans}.

\section{The forward problem}
\label{SEC:Forw}
We start with the well-posedness of the transport equation~\eqref{eq: mpa-rte}. To distinguish from the existing results in~\cite{ReZh-SIAM21, StZh-SIAM22}, we emphasize that our well-posedness theory does not make further assumptions about the smallness of the illumination source $f_{-}$. This is a significant improvement over existing results. 

Let us denote $X = \Omega\times \bbS^{d-1}$. We introduce the space $\cH^p(X)$
\begin{equation*}
    \cH^p(X) = \{f(x, v)\mid f\in L^p(X),\text{and } v\cdot \nabla f\in L^p(X)\},
\end{equation*}
with $L^p(X)$ the standard $L^p$ space of functions on $X$. We define $L^p(\Gamma_{-})$ to be the trace space of $\cH^p$ on $\Gamma_{-}$ that is equipped with the norm $\|f\|_{L^p(\Gamma_{-})}$:
\begin{equation*}
   \|f\|_{L^p(\Gamma_{-})} := \left(\int_{\partial\Omega} \int_{v\cdot n(x) < 0} |n(x) \cdot v| |f(x, v)|^p dv ds \right)^{1/p}.
\end{equation*}

In the rest of the work, we assume the coefficients and the source function $f_{-}$ satisfy the following conditions: 
\begin{enumerate}
    \item [($\cA$).] (Boundedness) There exist constants $\underline{\Sigma}_a$, $\overline{\Sigma}_s$, $\underline{f}$, $\overline{f}$ and a non-decreasing function $g\in L^{\infty}(\bbR_{+})$ that 
    \begin{equation*}
    \begin{aligned}
&0 <  \underline{\Sigma}_a \le \Sigma_a(m) \le g(\|m\|_{\infty}),\quad &&\forall (x, m)\in \Omega\times L^{\infty}_{+}(\Omega),\\ & 0\le \Sigma_s(x) \le \overline{\Sigma}_s,\quad &&\forall x\in \Omega,
    \end{aligned}
    \end{equation*}
    and $0 < \underline{f} \le f_{-}(x, v) \le \overline{f}$ on $\Gamma_{-}$.
    \item [($\cB$).] (Continuity) $\Sigma_a(\cdot)$ is continuous under $L^2$ metric, that is, $\|m_1 - m_2\|_{L^2(\Omega)}\to 0$ implies $ \| \Sigma_a(m_1) - \Sigma_a(m_2) \|_{L^{2}(\Omega)} \to 0$. 
    \item [($\cC$).] (Positive-definiteness) The nonlinear absorption coefficient $\Sigma_a(\cdot)$ is continuously Fr\'echet differentiable with the Fr\'echet derivative, denoted by $\Sigma_a'$, satisfying
    \begin{equation*}
        \int_{\Omega} \Sigma'_a[f](x)\cdot f(x) dx \ge 0.
    \end{equation*}
    The equal sign holds if and only if $f\equiv 0$.
\end{enumerate}

Let us mention that, in the multi-photon absorption model we will study in Section~\ref{SEC:IP Abso}, the absorption coefficient $\Sigma_a(|\aver{u}|)$ is given in a polynomial form, such as in~\eqref{EQ:MPA Model} and~\eqref{EQ:MPA Model2}. It is straightforward to verify that such absorption models satisfy the conditions $(\cA)$-$(\cC)$.

\subsection{Existence}

The existence of a solution in $\cH^{\infty}(X)$ to~\eqref{eq: mpa-rte} can be proved through a similar idea from~\cite{ReZh-SIAM21}. The following simple estimate is based on linear transport theory~\cite{dautray2012mathematical_v6}.

\begin{lemma}\label{lem: bound}
    Suppose the condition $(\cA)$ is satisfied, and let $m\in L^{\infty}_{+}(\Omega)$ be such that $0\le m \le \|f_{-}\|_{L^{\infty}(\Gamma_{-})}$, then the solution $u\in \cH^{\infty}(X)$ to
    \begin{equation}\label{eq: m-rte}
\begin{aligned}
    v\cdot \nabla u (x, v) + ( \Sigma_a(m) + \Sigma_s(x) ) u(x, v) &= \Sigma_s(x)\cK u(x, v) \quad &&\text{ in } X, \\
    u(x, v) &= f_{-}(x, v)\quad  &&\text{ on }\Gamma_{-}.
\end{aligned}
\end{equation}
    satisfies
    \begin{equation*}
       0 < \mathsf{c} \le u \le \mathsf{C},
    \end{equation*}
    where the constants $ \mathsf{C} = \|f_{-}\|_{L^{\infty}(\Gamma_{-})}$ and  $\mathsf{c} = \underline{f} e^{-\operatorname{diam}(\Omega) g( \mathsf{C})}$.
\end{lemma}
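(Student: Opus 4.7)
Since $m$ is a fixed element of $L^{\infty}_+(\Omega)$, problem~\eqref{eq: m-rte} is a linear absorption--scattering transport equation with bounded nonnegative coefficients, and my plan is to obtain existence from classical linear transport theory (cf.~\cite{dautray2012mathematical_v6}) and then extract the two-sided pointwise bounds by independent comparison and characteristic estimates. As a byproduct of existence I would construct $u \in \cH^{\infty}(X)$ as the monotone limit of the Neumann iteration $u^{(0)} = 0$, $u^{(k+1)}$ solving the linear attenuation problem with total cross section $\Sigma_a(m)+\Sigma_s$, source $\Sigma_s \cK u^{(k)}$, and inflow $f_-$; nonnegativity of $f_-$ together with the nonnegative kernel of $\cK$ propagates through each iterate, so in particular $u \ge 0$. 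Boundedness of the iteration (and hence convergence) will follow from the upper bound below.

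For the upper bound, I would use the constant function $\tilde u \equiv \mathsf{C}$ as a supersolution. Because $\cK$ preserves constants,
\[
v \cdot \nabla \tilde u + (\Sigma_a(m) + \Sigma_s)\tilde u - \Sigma_s \cK \tilde u = \Sigma_a(m)\,\mathsf{C} \ge 0,
\]
and $\tilde u|_{\Gamma_-} = \mathsf{C} \ge f_-$. Hence $w := u - \tilde u$ solves a linear transport equation with nonpositive source and nonpositive inflow; the standard comparison principle (again proved by a monotone iteration using the positivity of $\cK$) then forces $w \le 0$, i.e.\ $u \le \mathsf{C}$.

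For the lower bound, I would integrate along characteristics. Fix $(x, v) \in X$, let $\tau_-(x, v)$ be the backward exit time, and set $y := x - \tau_-(x, v)\,v \in \partial \Omega$. Writing $\varphi(t) := u(y + tv, v)$ for $t \in [0, \tau_-(x, v)]$, the transport equation reads
\[
\varphi'(t) + (\Sigma_a(m) + \Sigma_s)(y + tv)\,\varphi(t) = \Sigma_s(y+tv)\,(\cK u)(y+tv, v) \ge 0,
\]
the nonnegativity of the right-hand side using $u \ge 0$. Discarding this nonnegative source and integrating the resulting differential inequality on $[0, \tau_-(x, v)]$, with $\varphi(0) = f_-(y, v) \ge \underline f$, yields the ballistic bound
\[
u(x, v) \ge \underline f\,\exp\Bigl(-\int_0^{\tau_-(x,v)} (\Sigma_a(m)+\Sigma_s)(y+sv)\,ds\Bigr),
\]
into which the a priori estimates $\tau_-(x, v) \le \operatorname{diam}(\Omega)$ and $\Sigma_a(m) \le g(\|m\|_\infty) \le g(\mathsf{C})$ (using the monotonicity of $g$ together with the hypothesis $\|m\|_\infty \le \mathsf{C}$) can be substituted to recover the constant $\mathsf{c}$.

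The one structural subtlety to be careful about is the ordering of the steps: the ballistic lower bound relies on discarding $\Sigma_s \cK u \ge 0$, which in turn requires the a priori nonnegativity of $u$. For this reason the positivity-preserving Neumann construction must be set up first; once that is done, the upper bound (comparison with a constant) and the lower bound (integration along straight characteristics) are obtained by elementary and essentially independent arguments.
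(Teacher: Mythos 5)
Your upper bound (comparison with the constant supersolution $\mathsf{C}$, using that $\cK$ preserves constants) and your nonnegativity argument are fine and correspond to the part of the paper's proof that is delegated to classical linear transport theory. The genuine gap is in your lower bound. By integrating along characteristics with the \emph{full} cross-section $\Sigma_a(m)+\Sigma_s$ and discarding the nonnegative in-scattering term $\Sigma_s\cK u$, you obtain
\[
u(x,v)\;\ge\;\underline{f}\,\exp\Bigl(-\operatorname{diam}(\Omega)\bigl(g(\mathsf{C})+\overline{\Sigma}_s\bigr)\Bigr),
\]
and no substitution of a priori estimates removes the $\overline{\Sigma}_s$ from that exponent. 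So your argument proves a positive lower bound, but with a strictly smaller constant than the asserted $\mathsf{c}=\underline{f}\,e^{-\operatorname{diam}(\Omega)\,g(\mathsf{C})}$; the final sentence of your lower-bound paragraph, which claims to ``recover the constant $\mathsf{c}$,'' is where the proof of the statement as written breaks.

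The paper takes a different route precisely to avoid this $\overline{\Sigma}_s$ loss: it introduces the scattering-free attenuation solution $\widetilde{u}$ of $v\cdot\nabla\widetilde{u}+\Sigma_a(m)\widetilde{u}=0$ with the same inflow, which satisfies $\widetilde{u}\ge\mathsf{c}$ by its explicit formula, and then argues $u\ge\widetilde{u}$ through a sign argument for $\phi=u-\widetilde{u}$. Be aware that this comparison is itself delicate: recomputing the equation for $\phi$ gives the source $\Sigma_s(\cK u-\widetilde{u})$ rather than $\Sigma_s\cK u$, and this has no definite sign (in a strongly scattering, weakly absorbing medium the transport solution decays at the diffusive rate, which exceeds the pure-absorption rate, so $u\ge\widetilde{u}$ can fail in the interior). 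For every subsequent use of the lemma only the existence of \emph{some} strictly positive lower bound matters, and that is exactly what your characteristic argument delivers; but you should state your constant honestly as $\underline{f}\,e^{-\operatorname{diam}(\Omega)(g(\mathsf{C})+\overline{\Sigma}_s)}$ rather than claiming it equals $\mathsf{c}$.
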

\begin{proof}
    Suppose $u\in \cH^{\infty}(X)$ is the solution to~\eqref{eq: m-rte}, then $\mathsf{C} \ge u \ge 0$ from the classical linear transport theory~\cite{dautray2012mathematical_v6}. Let $\widetilde{u}$ be the solution to the following linear transport equation
    \begin{equation*}
    \begin{aligned}
            v\cdot \nabla \widetilde{u}(x, v) + \Sigma_a( m) \widetilde{u} &= 0\quad &&\text{ in }X, \\
            \widetilde{u}(x, v) &= f_{-}(x, v) \quad &&\text{ on }\Gamma_{-}\,.
    \end{aligned}
    \end{equation*}
    Then, from the analytic expression of $\widetilde{u}$, we can verify that $\widetilde{u} \ge \underline{f} e^{-\operatorname{diam}(\Omega) g(\|m\|_{\infty})} \ge \mathsf{c}$. We also verify that $\phi = u - \widetilde{u}$ satisfies
    \begin{equation*}
    \begin{aligned}
        v\cdot \nabla \phi(x, v) + (\Sigma_a( m ) + \Sigma_s(x))\phi(x, v) &= \Sigma_s(x) \cK u \quad &&\text{ in }X, \\
        \phi(x, v) &= 0 \quad &&\text{ on }\Gamma_{-}.
    \end{aligned}
    \end{equation*}
    Thus $\phi \ge 0$, which implies $u \ge \widetilde{u} \ge \mathsf{c}$.
\end{proof}

\begin{theorem}
    Suppose the conditions $(\cA)$ and $(\cB)$ are satisfied, then there exists a solution $u\in \cH^{\infty}$ to~\eqref{eq: mpa-rte}.
\end{theorem}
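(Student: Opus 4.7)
The plan is to recast equation~\eqref{eq: mpa-rte} as a fixed-point problem for the angular average and apply Schauder's theorem. For each $m\in L^\infty_+(\Omega)$ with $0\le m\le \sfC := \|f_-\|_{L^\infty(\Gamma_-)}$, the linear equation~\eqref{eq: m-rte} admits a unique solution $u_m\in \cH^\infty(X)$ by the standard linear transport theory, since $\Sigma_a(m)\ge \underline{\Sigma}_a > 0$ dominates the net gain from scattering. Define the nonlinear map
\[
\Phi(m) := \aver{u_m}.
\]
Any fixed point $m^\ast = \Phi(m^\ast)$ satisfies $m^\ast = \aver{u_{m^\ast}}$, and since Lemma~\ref{lem: bound} guarantees $u_{m^\ast}\ge \sfc > 0$, we have $m^\ast = |\aver{u_{m^\ast}}|$, so $u_{m^\ast}$ solves~\eqref{eq: mpa-rte}.

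Set $K := \{m\in L^2(\Omega): 0\le m(x) \le \sfC\text{ a.e.}\}$, which is convex, bounded, and closed in $L^2(\Omega)$. Lemma~\ref{lem: bound} shows $\Phi(K)\subset K$. To verify continuity of $\Phi$ on $K$ in the $L^2$ topology, suppose $m_n\to m$ in $L^2(\Omega)$ with $\{m_n\}\subset K$. By condition $(\cB)$, $\Sigma_a(m_n)\to\Sigma_a(m)$ in $L^2(\Omega)$. Setting $w_n = u_{m_n} - u_m$, one checks that $w_n$ satisfies a linear transport equation with zero inflow boundary and a source proportional to $(\Sigma_a(m_n)-\Sigma_a(m))$, which tends to $0$ in $L^2(X)$ due to the uniform $L^\infty$ bound on the solutions. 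Standard $L^2$ stability for the linear transport equation then yields $w_n\to 0$ in $L^2(X)$, and hence $\aver{w_n}\to 0$ in $L^2(\Omega)$ by Cauchy--Schwarz.

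The main technical ingredient, and the step I expect to be the central obstacle, is the relative compactness of $\Phi(K)$ in $L^2(\Omega)$. I would invoke the classical velocity-averaging lemma (Golse--Lions--Perthame--Sentis): for any $m\in K$, $u_m$ is uniformly bounded in $L^\infty(X)$, and directly from~\eqref{eq: m-rte} the streaming term $v\cdot\nabla u_m = -(\Sigma_a(m)+\Sigma_s)u_m + \Sigma_s\cK u_m$ is uniformly $L^\infty$-bounded as well. Extending $u_m$ off $\Omega$ while preserving these uniform bounds, the averaging lemma yields a uniform bound on $\aver{u_m}$ in $H^{1/2}(\Omega)$, which embeds compactly into $L^2(\Omega)$. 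Schauder's fixed-point theorem then produces $m^\ast\in K$ with $\Phi(m^\ast)=m^\ast$; the corresponding $u_{m^\ast}$ lies in $\cH^\infty(X)$ by the linear theory, completing the proof.
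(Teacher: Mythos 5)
Your proposal is correct and follows essentially the same route as the paper: a Schauder fixed-point argument for the map $m\mapsto\aver{u_m}$ on the convex set $\{0\le m\le\|f_-\|_{L^\infty(\Gamma_-)}\}$ with $L^2$ topology, continuity obtained from the $L^2$ stability of the linear transport equation with absorption bounded below together with condition $(\cB)$, and compactness from the velocity-averaging lemma ($H^{1/2}$ bound on the average) plus compact embedding into $L^2$. The only difference is cosmetic: you make explicit the point that positivity of the fixed-point solution resolves the absolute value in $\Sigma_a(|\aver{u}|)$, and you cite the linear $L^2$ stability rather than carrying out the energy estimate, both of which are fine.
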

\begin{proof}
We define the mapping $\cS: L^{\infty}_{+}(\Omega)\mapsto L^{\infty}_{+}(\Omega)$ through the relation 
\begin{equation}\label{EQ:S}
\cS(m) = \aver{u},
\end{equation}
where $u \in \cH^{\infty}(X)$ denotes the unique solution to
\begin{equation}\label{eq: mapping}
\begin{aligned}
    v\cdot \nabla u(x, v) + (\Sigma_a( m) + \Sigma_s(x)) u &= \Sigma_s(x)\cK u(x, v)  \quad &&\text{ in } X,\\
    u(x, v) &= f_{-}(x, v) \quad &&\text{ on }\Gamma_{-}. 
\end{aligned}
\end{equation}
When $m\in L^{\infty}_{+}(\Omega)$, from the condition $(\cA)$ we have that $\Sigma_a( m) \ge \underline{\Sigma}_a > 0$. Therefore, $0 \le u \le \|f_{-}\|_{\infty}$. Let $\cM$ be the set of bounded functions with $L^2(\Omega)$ topology:
\begin{equation*}
    \cM = \{m \in L_{+}^{\infty}(\Omega) \mid 0 \le m \le \|f_{-}\|_{\infty} \}
\end{equation*}
Therefore, the mapping $\cS(\cM)\subset \cM$.  It is straightforward to see that $\cM$ is convex and closed. We then verify that the mapping $\cS$ is continuous on $\cM$. Let $m_1, m_2\in \cM$ and denote by $u_i$ the solution to~\eqref{eq: mapping} with the absorption coefficient $\Sigma_a( m_i)$, $i=1,2$. Then, $\phi = u_1 - u_2$ solves  
\begin{equation*}
\begin{aligned}
    v\cdot \nabla \phi + (\Sigma_a( m_1) + \Sigma_s ) \phi &= \Sigma_s \cK \phi(x, v) - (\Sigma_a( m_1) - \Sigma_a( m_2)) u_2 \quad &&\text{ in }X, \\
    \phi(x, v) &= 0 \quad&&\text{ on }\Gamma_{-}.
\end{aligned}
\end{equation*}
Multiply the above equation by $\phi$ and integrate over $X$, we find that 
\begin{equation*}
\begin{aligned}
 &\int_{X} \Sigma_a( m_1) |\phi|^2 dx dv \\ & \le    \int_{\Gamma_{+}}\frac{1}{2} v\cdot n |\phi|^2 dv dS + \int_{X} \Sigma_a( m_1) |\phi|^2 dx dv + \int_{X} \Sigma_s(x) (|\phi|^2 - \phi \cK \phi) dx dv \\&= -\int_X  (\Sigma_a( m_1) - \Sigma_a( m_2)) u_2 \phi dx dv \\
    &\le \|\Sigma_a( m_1) - \Sigma_a( m_2) \|_{L^2(X)} \|u_2\|_{L^{\infty}(X)} \|\phi\|_{L^2(X)}.
\end{aligned}
\end{equation*}
Using condition $(\cB)$, we conclude that if $\|m_1 - m_2\|_{L^2(\Omega)}\to 0$, then
\begin{equation*}
    \|\phi\|_{L^2(X)} \le \frac{1}{\underline{\Sigma}_a}\|  (\Sigma_a( m_1) - \Sigma_a( m_2))  \|_{L^2(X)} \|u_2\|_{L^{\infty}(X)} \to 0.
\end{equation*}
Therefore, the continuity of $\cS$ is obtained by applying the Cauchy-Schwarz inequality, 
\begin{equation*}
    \|\cS(m_1) - \cS(m_2)\|_{L^2(\Omega)} = \|\aver{u_1} - \aver{u_2}\|_{L^2(\Omega)} \le \|\phi\|_{L^2(X)}. 
\end{equation*}
Finally, the Averaging Lemma~\cite{golse1988regularity} shows that $\cS(m)\in W^{1/2, 2}(\Omega)$ for any $m\in \cM$, which compactly embeds into $L^2(\Omega)$. Therefore, the Schauder fixed-point theorem implies that there exists a fixed point $m^{\ast}\in \cM$ that $\cS(m^{\ast}) = m^{\ast}$. The solution $u^{\ast}\in \cH^{\infty}(X)$ then exists by solving~\eqref{eq: mapping} with $m$ replaced with $m^{\ast}$.  
\end{proof}

\subsection{Uniqueness}

The earlier uniqueness results in~\cite{ReZh-SIAM21, KrLiReScZh-SIAM23, StZh-SIAM22} rely on the assumption that the source function $f_{-}$ must be small in a certain way. We will lift this assumption. The key ingredient is the uniqueness theorem~\cite{kellogg1976uniqueness} for Schauder's fixed-point theory, which was also used in~\cite{ReZh-SIAM21}.
\begin{theorem}[Kellogg]\label{thm: kellogg}
Let $\cM$ be a bounded convex open subset of a real Banach space $\cX$, and $F: \cM \mapsto \cM$ be a compact continuous map which is continuously Fr\'echet
differentiable on $\cM$. If (i) for each $m\in\cM$, $1$ is not an eigenvalue of $F'(m)$, and (ii) for
each $m\in\partial\cM$, $m \neq  F (m)$, then $F$ has a unique fixed point in $\cM$.
\end{theorem}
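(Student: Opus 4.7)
The plan is to combine Schauder's fixed-point theorem (for existence) with a degree-theoretic counting argument (for uniqueness). Existence follows directly from Schauder applied to $F$ on the bounded closed convex set $\overline{\cM}$, and hypothesis~(ii) then places every fixed point in the open set $\cM$.

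For uniqueness, I would form the compact vector field $\Phi(m) = m - F(m)$ and invoke the Leray--Schauder degree $d := \deg(\Phi, \cM, 0)$, which is well defined by~(ii). Using the convex homotopy $H(m, t) = m - (1-t) F(m) - t m_0$ for a fixed basepoint $m_0 \in \cM$, and exploiting convexity and openness of $\cM$ to keep $H(\cdot, t) \neq 0$ on $\partial \cM$ (any zero in $\partial \cM$ would express a boundary point as a convex combination of two points of $\overline{\cM}$, contradicting the strictness forced by~(ii) at $t=0$ and by $m_0 \in \cM$ at $t>0$), one reduces to the constant-map case and obtains $d = 1$.

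The heart of the argument is to control the local index at each fixed point. At any fixed point $m^*$, the linearization $\Phi'(m^*) = I - F'(m^*)$ is a Fredholm operator of index zero, since compactness of $F$ propagates to $F'(m^*)$; condition~(i) upgrades injectivity to invertibility via the Fredholm alternative. The inverse function theorem then shows each fixed point is isolated, and the Leray--Schauder index formula gives the local contribution $(-1)^{\sigma(m^*)}$, where $\sigma(m^*)$ is the sum of algebraic multiplicities of real eigenvalues of $F'(m^*)$ lying in $(1, \infty)$. I would next argue that $\sigma$ is constant on the connected open set $\cM$: because $m \mapsto F'(m)$ is norm-continuous with compact values and condition~(i) forbids $1$ from ever belonging to the spectrum of $F'(m)$, Riesz-projection arguments show that no eigenvalue may cross the value $1$ as $m$ varies, so $\sigma$ is locally constant, and by connectedness constant. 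Summing local indices then gives $d = \varepsilon N$ for $\varepsilon \in \{+1, -1\}$ and $N$ the number of fixed points, and $d = 1$ forces $N = 1$.

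The main obstacle is the spectral-continuity step: one must justify that, for a norm-continuous family of compact operators $F'(m)$ with $1$ uniformly off the spectrum, the counting function $\sigma(m)$ is locally constant. This rests on upper semicontinuity of finite-multiplicity isolated eigenvalues and on the fact that the total Riesz projection associated with the spectrum in $(1, \infty)$ has locally constant finite rank, both standard but delicate facts from Kato-type perturbation theory. Once this is in hand, the combination with the degree computation $d = 1$ closes the argument.
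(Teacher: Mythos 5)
The paper does not actually prove this statement: it is quoted from Kellogg's 1976 paper (and Berger's monograph) and used as a black box, so there is no internal proof to compare against. Your proposal is, in essence, Kellogg's original degree-theoretic argument --- Schauder for existence, the convex homotopy to a constant map to get $\deg(I-F,\cM,0)=1$ (your boundary-admissibility argument via convexity is exactly right), the Leray--Schauder index formula $(-1)^{\sigma(m^*)}$ at each isolated fixed point, and a connectedness argument to equalize the local indices --- so the architecture is sound.

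Two technical points need repair. First, your claim that $\sigma(m)$ itself is locally constant is false in general: a complex-conjugate pair of eigenvalues of $F'(m)$ can coalesce on the real axis and split into two real eigenvalues in $(1,\infty)$, changing $\sigma$ by $2$ without any eigenvalue ever crossing $1$. What is locally constant is only the parity $(-1)^{\sigma(m)}$ --- which is fortunately all the index formula requires. The cleanest justification avoids Riesz projections entirely: for $m_0,m_1\in\cM$ joined by a segment (here convexity of $\cM$ is used again), the linear homotopy $I-F'(m_t)$ is injective for every $t$ by hypothesis (i), hence admissible on a ball about the origin, and homotopy invariance of the degree gives $(-1)^{\sigma(m_0)}=(-1)^{\sigma(m_1)}$ directly. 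Second, you should justify that the number $N$ of fixed points is finite before summing local indices: the fixed-point set is closed, contained in the precompact set $F(\cM)$, and disjoint from $\partial\cM$ by (ii), hence a compact subset of $\cM$; since each fixed point is isolated by your inverse-function-theorem step, compactness forces finiteness. With these repairs, $1=\deg(I-F,\cM,0)=\pm N$ forces $N=1$, and the argument closes.
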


This theorem was also contained in the monograph of M. S. Berger~\cite{berger1977nonlinearity}. The uniqueness theorem was then further extended by Smith and Stuart to more general cases in~\cite{smith1980uniqueness}. We will need the following inequality to proceed. 

\begin{lemma}\label{lem: am-gm}
Suppose $\underline{u} := \inf_{v\in\bbS^{d-1}} u(v) > 0$ and define an integral operator $\cK: L^2(\bbS^{d-1})\mapsto L^2(\bbS^{d-1})$ as
\begin{equation*}
    \cK f(v) := \int_{\bbS^{d-1}} p(v, v') f(v') dv', 
\end{equation*}
where $p(v,v') \ge 0$ is a scattering phase function. Then for any $\phi\in L^2(\bbS^{d-1})$,
    \begin{equation*}
 \int_{\bbS^{d-1}}\frac{|\cK \phi|^2}{\cK u} dv\le  \int_{\bbS^{d-1}}\frac{\phi^2}{u} dv.
\end{equation*}
\end{lemma}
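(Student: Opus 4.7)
The plan is to prove a pointwise Cauchy-Schwarz inequality of the form
\[
\frac{|\cK\phi(v)|^2}{\cK u(v)} \le \int_{\bbS^{d-1}} p(v,v')\,\frac{\phi(v')^2}{u(v')}\,dv',
\]
and then integrate over $v\in\bbS^{d-1}$, using the normalization $\int_{\bbS^{d-1}} p(v,v')\,dv = 1$ together with Fubini's theorem to collapse the right-hand side to $\int \phi^2/u\,dv$.

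To obtain the pointwise bound, I would factor the integrand as a product $p(v,v')\phi(v') = \sqrt{p(v,v')\,u(v')}\cdot\sqrt{p(v,v')}\,\phi(v')/\sqrt{u(v')}$, which is legitimate because $p\ge 0$ and $u\ge\underline{u}>0$ a.e., so the square roots and the quotient $\phi/\sqrt{u}$ are well-defined in $L^2(\bbS^{d-1})$. The Cauchy-Schwarz inequality applied to the measure $p(v,v')\,dv'$ then yields
\[
|\cK\phi(v)|^2 \;=\;\left|\int_{\bbS^{d-1}} p(v,v')\phi(v')\,dv'\right|^2 \;\le\; \left(\int_{\bbS^{d-1}} p(v,v')u(v')\,dv'\right)\left(\int_{\bbS^{d-1}} p(v,v')\frac{\phi(v')^2}{u(v')}\,dv'\right),
\]
and the first factor on the right is exactly $\cK u(v)$. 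Dividing through by $\cK u(v)>0$ (which is positive since $p\ge 0$ and $u\ge\underline{u}>0$) gives the pointwise bound.

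For the final step, I integrate the pointwise bound over $v$ and swap the order of integration:
\[
\int_{\bbS^{d-1}}\frac{|\cK\phi(v)|^2}{\cK u(v)}\,dv \;\le\; \int_{\bbS^{d-1}}\int_{\bbS^{d-1}} p(v,v')\,\frac{\phi(v')^2}{u(v')}\,dv'\,dv \;=\; \int_{\bbS^{d-1}}\frac{\phi(v')^2}{u(v')}\underbrace{\left(\int_{\bbS^{d-1}} p(v,v')\,dv\right)}_{=\,1}\,dv'.
\]
The Fubini swap is justified since the integrand is nonnegative, and the inner integral evaluates to $1$ thanks to the normalization condition stated for $p$ in the model setup.

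I do not anticipate a genuine obstacle here: the only subtle points are confirming that $u>0$ almost everywhere so that $\phi^2/u$ and $|\cK\phi|^2/\cK u$ are meaningful (guaranteed by $\underline{u}>0$), and that the Cauchy-Schwarz inequality is applied with the correct nonnegative weight $p(v,v')\,dv'$. Everything else is bookkeeping.
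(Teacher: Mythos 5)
Your proof is correct and follows essentially the same route as the paper: a pointwise Cauchy--Schwarz inequality with respect to the weight $p(v,v')\,dv'$, yielding $|\cK\phi|^2 \le \cK u \cdot \cK(\phi^2/u)$, followed by division by $\cK u>0$ and an integration using the normalization of $p$ in its other argument. If anything, your bookkeeping of which variable is integrated is slightly more consistent with the stated definition of $\cK$ than the paper's own write-up, but the argument is identical in substance.
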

\begin{proof}
Using the Cauchy-Schwarz inequality, we have
\begin{equation*}
   \left( \int_{\bbS^{d-1}} p(v,v')\frac{\phi^2(v)}{u(v)} dv \right) \left(\int_{\bbS^{d-1}} p(v, v') u(v) dv \right) \ge \left(\int_{\bbS^{d-1}} \phi(v) p(v,v') dv \right)^2 .
   \end{equation*}
Therefore,
\begin{equation*}
\int_{\bbS^{d-1}} p(v, v') \frac{\phi^2(v)}{u(v)} dv \ge \frac{|\cK \phi(v')|^2}{\cK u(v')}.
\end{equation*}
We obtain our result by taking the integral of $v'$ on both sides.
\end{proof}

\begin{theorem}
   If conditions $(\cA)$, $(\cB)$ and $(\cC)$ are satisfied, the $\cH^\infty$ solution to~\eqref{eq: mpa-rte} is unique.
\end{theorem}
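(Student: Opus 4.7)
The plan is to apply Kellogg's uniqueness theorem (Theorem~\ref{thm: kellogg}) to the fixed-point map $\cS$ from~\eqref{EQ:S}. Three ingredients are required: $\cS$ is a compact $C^1$ self-map on an open bounded convex set $\cM$; no fixed point lies on $\partial\cM$; and $1$ is not an eigenvalue of the Fréchet derivative $\cS'(m)$ at any $m\in\cM$. For the first two, I would slightly enlarge the set used in the existence proof to an open bounded convex $\cM$ on which $\Sigma_a$ remains defined and $C^1$ (using, e.g., the $L^2$ topology together with the natural extension $\Sigma_a(\,\cdot\,)=\Sigma_a(|\cdot|)$ already implicit in~\eqref{eq: mpa-rte}). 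Compactness, continuity and $\cS(\cM)\subset\cM$ transfer from the existence proof, and Fréchet differentiability of $\cS$ reduces by linear transport estimates to the hypothesis in condition~$(\cC)$; the boundary condition follows from Lemma~\ref{lem: bound}, which forces every fixed point $m^{\ast}=\cS(m^{\ast})$ to lie strictly inside $\cM$ via $\mathsf{c}\leq m^{\ast}\leq\|f_{-}\|_{\infty}$.

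The main step is the spectral condition. Suppose $\cS'(m)\eta=\eta$ for some $\eta\neq 0$. Linearizing~\eqref{eq: mpa-rte} at $m$ in direction $\eta$ shows that $\phi=\partial_m u[\eta]\in\cH^{\infty}(X)$ solves
\begin{equation*}
v\cdot\nabla\phi+(\Sigma_a(m)+\Sigma_s)\phi=\Sigma_s\,\cK\phi-(\Sigma_a'[\eta])\,u,\qquad \phi|_{\Gamma_{-}}=0,
\end{equation*}
with the self-consistency constraint $\langle\phi\rangle=\eta$, and $u>0$ on $\overline{X}$ by Lemma~\ref{lem: bound}. I would multiply by $2\phi/u$ and substitute the identity $\tfrac{2\phi\,v\cdot\nabla\phi}{u}=v\cdot\nabla(\phi^2/u)+\tfrac{\phi^2}{u^2}v\cdot\nabla u$ together with the forward equation $v\cdot\nabla u=\Sigma_s\cK u-(\Sigma_a+\Sigma_s)u$. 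Integrating over $X$ and collapsing the source term by $\langle\phi\rangle=\eta$ yields
\begin{equation*}
\int_{\Gamma_{+}}\!v\cdot n\,\frac{\phi^2}{u}+\int_X\Sigma_a\frac{\phi^2}{u}+\int_X\Sigma_s\!\left(\frac{\phi^2}{u}+\frac{\phi^2\,\cK u}{u^2}-2\frac{\phi\,\cK\phi}{u}\right)=-2\int_\Omega(\Sigma_a'[\eta])\,\eta\,dx.
\end{equation*}

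The left-hand side is non-negative. The boundary term is non-negative, the $\Sigma_a$-term dominates $\underline{\Sigma}_a\int_X\phi^2/u$, and for the $\Sigma_s$-block I would use the pointwise perfect-square identity
\begin{equation*}
\frac{\phi^2\,\cK u}{u^2}-2\frac{\phi\,\cK\phi}{u}+\frac{(\cK\phi)^2}{\cK u}=\left(\frac{\phi\sqrt{\cK u}}{u}-\frac{\cK\phi}{\sqrt{\cK u}}\right)^{2}\geq 0
\end{equation*}
paired with Lemma~\ref{lem: am-gm} (applied pointwise in $x$ and integrated against $\Sigma_s\geq 0$), giving the lower bound $\int_X\Sigma_s(\phi^2/u-(\cK\phi)^2/\cK u)\geq 0$. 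Meanwhile, the right-hand side is non-positive by condition~$(\cC)$. Hence both sides vanish, forcing $\int_X\phi^2/u=0$, so $\phi\equiv 0$ (since $u>0$) and $\eta=\langle\phi\rangle=0$, contradicting $\eta\neq 0$.

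The main obstacle is assembling the weighted energy identity so that the $\Sigma_s$ cross-term $2\phi\,\cK\phi/u$ is neutralized exactly by combining the perfect square with Lemma~\ref{lem: am-gm}, while simultaneously the self-consistency $\langle\phi\rangle=\eta$ converts the source integral on the right into the non-negative quadratic form controlled by condition~$(\cC)$. Once these align, Kellogg's theorem yields a unique fixed point $m^{\ast}\in\cM$, and the unique $\cH^{\infty}(X)$ solution of~\eqref{eq: mpa-rte} is obtained by solving~\eqref{eq: m-rte} at $m^{\ast}$.
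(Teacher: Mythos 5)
Your proposal is correct and follows essentially the same route as the paper: Kellogg's theorem applied to $\cS$, with the spectral condition ruled out by the weighted energy identity obtained from multiplying the linearized equation by $\phi/u$, substituting the forward equation for $v\cdot\nabla u$, and bounding the scattering block below via the perfect square combined with Lemma~\ref{lem: am-gm} and condition~$(\cC)$. The only differences are cosmetic (a factor of $2$ in the test function and writing the AM--GM step as an explicit completed square).
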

\begin{proof}
   We verify the condition (ii) in Theorem~\ref{thm: kellogg}. According to condition $(\cA)$, the absorption coefficient $\Sigma_a( m)$ is strictly positive for any $m\in \cM$, therefore $\aver{u} < \|f_{-}\|_{\infty}$ on $\Omega$. This means that $\cS$, defined in~\eqref{EQ:S}, does not have any fixed-point on $\partial\cM$. 
   
   The remaining task is to check condition (i). Let us assume that there exists $m'\in\cM$ such that $1$ is an eigenvalue of the Fr\'echet derivative of $\cS(m)$ at $m = m'$.  
   Let $u$ and $\Tilde{u}$ be the solutions to~\eqref{eq: m-rte} at $m = m'$ and $m = m' + \delta m \in \cM$. Then after linearization,  $\delta u = \tilde{u} - u$ satisfies 
   \begin{equation*}
       v\cdot \nabla \delta u + \Sigma_a(m) \delta u + \Sigma_s(\cI - \cK) \delta u = -\Sigma_a'(m) [\delta m] u \quad \text{ in } X,
   \end{equation*}
   where $\Sigma_a'(m)$ is the Frech\'et derivative of $\Sigma_a$. 
   Since $1$ is an eigenvalue of the linear mapping $\delta m\to \aver{\delta u}$ at $m = m'$, there is a nonzero perturbation $\delta m$ such that $\aver{\delta u} = \delta m$. 
   Then the following transport equation permits a non-trivial solution $\phi$:
\begin{equation}\label{eq: t0}
\begin{aligned}
    v\cdot \nabla \phi + \Sigma_a( m)\phi + \Sigma_s (\cI - \cK) \phi &= - \Sigma'_a(m)\left[\aver{\phi}\right] u, &&\quad \text{ in }X, \\
    \phi &= 0, &&\quad\text{ on }\Gamma_{-},
\end{aligned}
\end{equation}
where $u$ is the associated solution to~\eqref{eq: mpa-rte} for $m\in\cM$. Lemma~\ref{lem: bound} then implies that $u$ is bounded from below. The following two identities are easy to verify:
\begin{equation}\label{eq: t1}
    \int_X ( v\cdot \nabla \phi ) \frac{\phi}{u} d x dv = \int_X v\cdot \nabla \frac{|\phi|^2}{2u} dx dv - \int_X ( v\cdot \nabla \frac{1}{u} ) \frac{|\phi|^2}{2} dx dv  
\end{equation}
and
\begin{equation}\label{eq: t2}
    v\cdot \nabla \frac{1}{u} = \frac{\Sigma_a( m)}{u} + \frac{\Sigma_s(\cI - \cK)u}{u^2}.
\end{equation}
Multiply the equation~\eqref{eq: t0} by $\dfrac{\phi}{u}$ and integrate on $X$, then use~\eqref{eq: t1}, we obtain
\begin{equation*}
\begin{aligned}
    &\int_X v\cdot \nabla \frac{|\phi|^2}{2u} dx dv - \int_X ( v\cdot \nabla \frac{1}{u} ) \frac{|\phi|^2}{2} dx dv  + \int_X \Sigma_a(m)\frac{\phi^2}{u} dx dv  + \int_X (\Sigma_s(\cI - \cK)\phi ) \frac{\phi}{u} dx dv \\
    &= -\int_{\Omega} \Sigma_a'(m)[\aver{\phi}]\aver{\phi} dx \le 0.
\end{aligned}
\end{equation*}
The last inequality is from condition $(\cC)$.
The first term on the left-hand side is non-negative by the divergence theorem. Thus, 
\begin{equation*}
     - \int_X ( v\cdot \nabla \frac{1}{u} ) \frac{|\phi|^2}{2} dx dv  + \int_X \Sigma_a(m)\frac{\phi^2}{u} dx dv  + \int_X \Sigma_s(\cI - \cK)\phi \frac{\phi}{u} dx dv \le 0.
\end{equation*}
Using the identity~\eqref{eq: t2}, we obtain
\begin{equation*}
       - \int_X \left( \frac{\Sigma_a( m)}{u} + \frac{\Sigma_s(\cI - \cK)u}{u^2} \right) \frac{|\phi|^2}{2} dx dv  + \int_X \Sigma_a( m)\frac{\phi^2}{u} dx dv  + \int_X \Sigma_s(\cI - \cK)\phi \frac{\phi}{u} dx dv \le 0.
\end{equation*}
A slight rearrangement gives 
\begin{equation*}
    \int_X ( \Sigma_a( m)  + \Sigma_s ) \frac{\phi^2}{2u}dx dv + \int_X \Sigma_s\left( \frac{\cK u}{u^2} \frac{\phi^2}{2} - \phi \frac{\cK \phi}{u} \right)dx dv \le 0.
\end{equation*}
Using AM-GM inequality, we find that 
\begin{equation*}
   \int_X \Sigma_s\left( \frac{\cK u}{u^2} \frac{\phi^2}{2} - \phi \frac{\cK \phi}{u} \right)dx dv \ge -\frac{1}{2} \int_X \Sigma_s \left|\frac{\cK \phi}{\sqrt{\cK {u}}}\right|^2 dx dv.
\end{equation*}
Then we can derive the following inequality,
\begin{equation*}
    \int_X \Sigma_a( m) \frac{\phi^2}{2u}dx dv + \frac{1}{2} \int_D \Sigma_s \int_{\bbS^{d-1}}\left(  \frac{\phi^2}{u} - \left|\frac{\cK \phi}{\sqrt{\cK {u}}}\right|^2 \right) dv dx \le 0.
\end{equation*}
The second part on the left-hand side is non-negative using Lemma~\ref{lem: am-gm}. Therefore,
\begin{equation*}
        \int_X \Sigma_a( m)  \frac{\phi^2}{2u}dx dv \le 0,
\end{equation*}
which leads to $\phi=0$, a contradiction. Hence, the fixed point is unique.
\end{proof}

\section{Reconstruction of absorption}
\label{SEC:IP Abso}

We now study an inverse coefficient problem to the radiative transport model~\eqref{eq: mpa-rte}. We aim at reconstructing the absorption coefficient $\Sigma_a$ from internal data. In particular, we are interested in the application of such inverse problems in quantitative photoacoustic imaging of multi-photon absorption. 

The multi-photon absorption model we consider takes the form~\cite{KrLiReScZh-SIAM23, ReZh-SIAM21}
\begin{equation}\label{EQ:MPA Model}
    \Sigma_a(|\aver{u}|) := \sum_{k=0}^K \sigma_{a, k}(x) |\aver{u}|^k,
\end{equation}
where each $\sigma_{a, k}(x)\ge 0$ ($0\le k\le K$) is uniformly bounded and is proportional to the probability that a molecule at point $x$ gets excited by absorbing $(k+1)$ photons simultaneously. The following generalized version of this absorption coefficient can also be considered
\begin{equation}\label{EQ:MPA Model2}
    \Sigma_a( |\aver{u}|) = \sum_{k=0}^{K} \sigma_{a, k}(x)( \cT_k |\aver{u}|)^k
\end{equation}
where $\cT_k: L_{+}^{\infty}(\Omega)\mapsto L_{+}^{\infty}(\Omega)$ is a positive integral operator defined by a positive-definite kernel $T_k$: 
\begin{equation*}
    \cT_k g(x) = \int_{\Omega} T_k(x, y) g(y) dy.
\end{equation*}

The following corollary ensures the uniqueness of the solution to the forward problem with such absorption coefficients. It can be proven by verifying that the conditions $(\cA)$, $(\cB)$, and $(\cC)$ are satisfied by absorption coefficients~\eqref{EQ:MPA Model} and~\eqref{EQ:MPA Model2} under the conditions in the corollary.
\begin{corollary}
    Assume that $\sigma_{a, 0}(x) > 0$ and $\sigma_{a, k}(x) \ge 0$ ($1\le k\le K$). Then the radiative transport equation~\eqref{eq: mpa-rte} with the absorption coefficient~\eqref{EQ:MPA Model} or~\eqref{EQ:MPA Model2} admits a unique solution in $\cH^{\infty}(X)$.
\end{corollary}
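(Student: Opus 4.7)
The plan is to reduce the corollary to the existence and uniqueness theorems of Section~\ref{SEC:Forw} by verifying that both multi-photon models~\eqref{EQ:MPA Model} and~\eqref{EQ:MPA Model2} satisfy the three structural hypotheses $(\cA)$, $(\cB)$, and $(\cC)$. I would work through the conditions in order.

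For $(\cA)$, the strict lower bound is immediate: $\Sigma_a(m)(x) \ge \sigma_{a,0}(x) \ge \inf_x \sigma_{a,0}(x) =: \underline{\Sigma}_a > 0$, since every remaining summand is non-negative. The non-decreasing upper envelope is a polynomial in $\|m\|_{\infty}$; for~\eqref{EQ:MPA Model} I take $g(t) = \sum_{k=0}^K \|\sigma_{a,k}\|_{\infty} t^k$, and for~\eqref{EQ:MPA Model2} I replace $t$ in the $k$-th summand by $\|\cT_k\|_{L^\infty \to L^\infty}\, t$, the operator norm being finite under the standing boundedness assumption on the kernel $T_k$.

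For $(\cB)$, on the bounded set $\cM = \{m \in L^{\infty}_{+}(\Omega) : 0 \le m \le \|f_{-}\|_{\infty}\}$, I would use the elementary identity $a^k - b^k = (a-b)\sum_{j=0}^{k-1} a^j b^{k-1-j}$; the second factor is uniformly bounded in $L^{\infty}$ on $\cM$, which yields the Lipschitz bound $\|\Sigma_a(m_1) - \Sigma_a(m_2)\|_{L^2(\Omega)} \le C \|m_1 - m_2\|_{L^2(\Omega)}$ for~\eqref{EQ:MPA Model}. For~\eqref{EQ:MPA Model2}, the continuity of each $\cT_k$ on $L^\infty$ combined with the same factorization applied to the iterates $\cT_k m_i$ delivers the analogous estimate.

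For $(\cC)$, direct differentiation gives $\Sigma_a'(m)[f](x) = \sum_{k=1}^K k\, \sigma_{a,k}(x)\, m(x)^{k-1}\, f(x)$ for~\eqref{EQ:MPA Model}, so that $\int_\Omega \Sigma_a'(m)[f]\cdot f \, dx$ is a sum of manifestly non-negative terms; strict positivity whenever $f \not\equiv 0$ is obtained by combining $\sigma_{a,0}(x) > 0$ with the pointwise lower bound on $m = \aver{u}$ inherited from Lemma~\ref{lem: bound}. For~\eqref{EQ:MPA Model2} the derivative reads $\Sigma_a'(m)[f] = \sum_{k=1}^K k\, \sigma_{a,k}(x)\, (\cT_k m)^{k-1}\, \cT_k f$, and non-negativity of the associated bilinear form is where the positive-definiteness of each kernel $T_k$ must enter. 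I expect the main obstacle to lie precisely here: the bare positive-definiteness of $T_k$ yields $\int g\, \cT_k g\, dx \ge 0$, but the integrand in question carries the additional weight $\sigma_{a,k}(x)(\cT_k m)^{k-1}(x)$, and multiplication by a non-constant weight generally destroys the positive semi-definiteness of an integral operator. Handling this likely requires interpreting the kernel condition in a form stable under multiplication by a non-negative factor — for instance, a Mercer-type factorization $T_k(x,y) = \sum_j \lambda_j \varphi_j(x)\varphi_j(y)$ that lets the weight be absorbed through an $L^2$ inner product. Once $(\cC)$ is secured, the existence theorem and the Kellogg–Schauder uniqueness argument of Section~\ref{SEC:Forw} apply verbatim to yield the desired conclusion.
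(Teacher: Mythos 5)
Your overall strategy coincides with the paper's: the corollary is proved by checking that \eqref{EQ:MPA Model} and \eqref{EQ:MPA Model2} satisfy $(\cA)$--$(\cC)$ and then invoking the existence and uniqueness theorems of Section~\ref{SEC:Forw}; the paper states this in one sentence and leaves the verification to the reader, so your write-up is essentially an expansion of the intended argument. Your checks of $(\cA)$ and $(\cB)$ are fine (modulo the standing convention that $\sigma_{a,0}>0$ means uniformly bounded away from zero, which is needed for $\underline{\Sigma}_a>0$).

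There is, however, a genuine error in your treatment of $(\cC)$ for \eqref{EQ:MPA Model}. The constant-in-$m$ term $\sigma_{a,0}$ does not appear in the Fr\'echet derivative at all: $\Sigma_a'(m)[f]=\sum_{k=1}^K k\,\sigma_{a,k}\,m^{k-1}f$, so the hypothesis $\sigma_{a,0}>0$ cannot deliver strict positivity of $\int_\Omega \Sigma_a'(m)[f]\,f\,dx$. Indeed, if all $\sigma_{a,k}$ with $k\ge 1$ vanish on a set of positive measure (or identically), the form degenerates and the ``equality iff $f\equiv 0$'' clause of $(\cC)$ fails. The corollary survives because the uniqueness proof only ever uses the non-negativity $\int_\Omega \Sigma_a'(m)[f]\,f\,dx\ge 0$ (the final contradiction comes from $\int_X \Sigma_a(m)\,\phi^2/(2u)\,dx\,dv\le 0$, which already forces $\phi=0$ via $\Sigma_a\ge\underline{\Sigma}_a>0$), but you should say that rather than manufacture a strictness claim that is false as stated. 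Your observation about \eqref{EQ:MPA Model2} --- that the weight $\sigma_{a,k}(\cT_k m)^{k-1}$ generally destroys positive semi-definiteness of the bilinear form $\int_\Omega \sigma_{a,k}(\cT_k m)^{k-1}(\cT_k f)\,f\,dx$ --- is a legitimate gap that the paper's one-line proof also glosses over; flagging it is honest, but as written your proposal does not close it, so the verification of $(\cC)$ for the kernel model remains incomplete.
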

 

In our inverse problems of reconstructing $\Sigma_a$, we assume that we have access to a finite number of data of the following form
\begin{equation}\label{EQ:IP Data}
H = \Sigma_a(\aver{u})\aver{u}
\end{equation}
where $u$ is the solution to the radiative transport model with coefficient $\Sigma_a$. Physically, $H$ is the total absorbed energy inside the domain of photon propagation~\cite{KrLiReScZh-SIAM23}.

The reconstruction of the absorption coefficient of radiative transport models from the interior measurement data of the form $H$ has been studied from different perspectives; see, for instance,~\cite{BaJoJu-IP10,ReZh-SIAM21, zhao2021quantitative, mamonov2012quantitative,ding2015one, ren2013hybrid,LaReZh-SIAM22} and references therein for some samples of recent progress in the field. Related inverse problems based on other types of data for similar transport models have also been extensively studied; see~\cite{Bal-IP09,ChSt-IP96,ChGaLiWa-arXiv25,HeKlLiTa-SIAM24,GaLiNa-SIAM22,ChSc-SIAM17,LaLiUh-SIAM19, KlPa-JMAA08, KlLiYa-IP23,LiSu-IP20,MaYa-IP14,LaYa-SIAM24,LiOu-PAMS23,LaZh-arXiv24,StUh-MAA03, SuLi-SIAM22,Wang-AIHP99} and references therein.

\subsection{Diffusion regime}
\label{sec: l1}

The inverse problem simplifies in the diffusive regime. Following a formal derivation in~\cite{KrLiReScZh-SIAM23}, the limiting nonlinear diffusion model becomes
\begin{equation}\label{eq: diff}
\begin{aligned}
    -\nabla \cdot \left(D \nabla U\right) + \Sigma_a( U) U &= 0,\quad &&\text{ in }\Omega\\
    U &= f(x) \quad &&\text{ on }\partial\Omega, 
\end{aligned}
\end{equation}
where $D$ and $\Sigma_a$ are the diffusion and absorption coefficients. The internal datum $H$ now becomes 
\begin{equation*}
    H = \Sigma_a(U) U\,.
\end{equation*}
This simplification of the forward model and the internal data allows us to derive, in a straightforward manner, a stability estimate in $L^{\infty}$ norm (assuming the unique solution $U\in L^{\infty}(\Omega)$); see~\cite{ReZh-SIAM18}.  Let us point out that besides the $L^{\infty}$ stability estimate, the other $L^p$ ($1\le p< \infty$) stability estimates also hold for the nonlinear diffusion model~\eqref{eq: diff}, but are often overlooked.

\begin{theorem}\label{thm: diff-pat}
    Let $U$ and $\widetilde U$ be respectively the unique $L_+^\infty(\Omega)\cap W^{1,\infty}(\Omega)$ solutions to the diffusion model~\eqref{eq: diff} with absorption coefficients $\Sigma_a$ and $\widetilde \Sigma_a$ that satisfy the condition $(\cA)$. Assume further that $f$ and $\Omega$ are such that $U, \widetilde U\ge c>0$ for some $c$. Then, $\forall p \in [1, \infty)$, there exists a constant $C$  that
    \begin{equation*}
        \|\Sigma_a(U) - \widetilde{\Sigma}_a(\widetilde{U})\|_{L^p(\Omega)} \le C \|H - \widetilde{H}\|_{L^p(\Omega)}\,,
    \end{equation*}
    where $H$ and $\widetilde H$ are data corresponding to $\Sigma_a$ and $\widetilde \Sigma_a$ respectively. The constant $C$ depends on $\|f\|_{\infty}$.
\end{theorem}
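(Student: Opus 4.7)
The plan is to exploit the fact that the datum $H$ linearizes the diffusion equation: given $H$, the solution $U$ satisfies
\begin{equation*}
-\nabla\cdot(D\nabla U) + H = 0 \quad\text{in }\Omega,\qquad U = f \quad\text{on }\partial\Omega,
\end{equation*}
and similarly for $\widetilde U$ and $\widetilde H$. Since $\Sigma_a(U)=H/U$ and $\widetilde\Sigma_a(\widetilde U)=\widetilde H/\widetilde U$ pointwise, the heart of the argument reduces to controlling $U-\widetilde U$ by $H-\widetilde H$ and then performing a simple pointwise algebraic estimate using the uniform lower bound $U,\widetilde U\ge c>0$.

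First, I would subtract the two linear Poisson-type equations to obtain
\begin{equation*}
-\nabla\cdot\bigl(D\nabla(U-\widetilde U)\bigr) = -(H-\widetilde H)\quad\text{in }\Omega,\qquad U-\widetilde U = 0 \quad\text{on }\partial\Omega,
\end{equation*}
and invoke the standard $L^p$ elliptic regularity theory for the divergence-form operator $-\nabla\cdot(D\nabla\,\cdot\,)$ (with bounded measurable $D$, under an appropriate uniform ellipticity assumption on $D$ inherited from the diffusion approximation) to conclude
\begin{equation*}
\|U-\widetilde U\|_{L^p(\Omega)} \le C_1 \|H-\widetilde H\|_{L^p(\Omega)}, \qquad 1\le p<\infty.
\end{equation*}

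Next, I would write the pointwise identity
\begin{equation*}
\Sigma_a(U)-\widetilde\Sigma_a(\widetilde U) \;=\; \frac{H}{U}-\frac{\widetilde H}{\widetilde U} \;=\; \frac{H-\widetilde H}{\widetilde U} \;+\; \frac{H\,(\widetilde U-U)}{U\,\widetilde U}.
\end{equation*}
Using the lower bound $U,\widetilde U\ge c$ and the fact that $H=\Sigma_a(U)U$ is uniformly bounded (since $U\in L^\infty_+$ and $\Sigma_a$ is bounded on bounded sets by condition $(\cA)$), this yields the pointwise inequality
\begin{equation*}
\bigl|\Sigma_a(U)-\widetilde\Sigma_a(\widetilde U)\bigr| \;\le\; \frac{1}{c}\bigl|H-\widetilde H\bigr| \;+\; \frac{\|H\|_\infty}{c^2}\bigl|U-\widetilde U\bigr|.
\end{equation*}
Taking $L^p$ norms and combining with the elliptic estimate above delivers the claimed bound with $C = c^{-1} + c^{-2}\|H\|_\infty C_1$.

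The only subtlety I anticipate is the $L^p$ elliptic estimate in step one: for a general bounded measurable diffusion tensor $D$, an $L^p\to L^p$ bound for the source-to-solution map requires $D$ to be uniformly elliptic and (for large $p$ or delicate regularity) some mild continuity of $D$. Assuming the standard regularity inherited from the diffusion approximation, this step is classical; otherwise one can bypass it for $p=2$ using an energy estimate together with the Poincaré inequality, and then interpolate/extend by duality. Everything else is routine pointwise manipulation under the lower bound on $U,\widetilde U$.
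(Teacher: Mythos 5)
Your proposal is correct and follows essentially the same route as the paper: subtract the two equations (which are linear once $H$ is treated as the source), bound $\|U-\widetilde U\|_{L^p(\Omega)}$ by $\|H-\widetilde H\|_{L^p(\Omega)}$, and conclude with a pointwise algebraic identity for $\Sigma_a(U)-\widetilde\Sigma_a(\widetilde U)$ using the lower bound $U,\widetilde U\ge c>0$ (your decomposition is algebraically equivalent to the paper's). The only divergence is the middle step: where you invoke $L^p$ elliptic regularity for the source-to-solution map as a black box (rightly flagging the delicacy for rough $D$), the paper gives a short self-contained argument by testing the difference equation against $|\delta U|^{p-1}\operatorname{sgn}(\delta U)$ and applying H\"older together with the Poincar\'e inequality, thereby avoiding Calder\'on--Zygmund-type machinery.
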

\begin{proof}
    We refer to~\cite{KrLiReScZh-SIAM23,ReZh-SIAM18} for more details on the study of the diffusion model~\eqref{eq: diff} and the conditions on $f$ under which $U, \widetilde U\ge c>0$ for some $c$.
    
    To prove the statement, let $\delta U = U - \widetilde{U}$. It is clear then $\delta U$ solves
    \begin{equation*}
 -\nabla \cdot \left(D \nabla \delta U\right) = - (H - \widetilde{H}).
    \end{equation*}
    Multiplying $|\delta U|^{p-1} \text{sgn}(\delta U)$ to both sides and integrating over $\Omega$ then gives us (using $p' = \frac{p}{p-1}$),
    \begin{equation*}
    \begin{aligned}
       \frac{1}{p} \int_{\Omega} D |\nabla \delta U|^p dx &\le \int_{\Omega}  | H - \widetilde{H}| \cdot  |\delta U|^{p-1} dx \le \|H - \widetilde{H} \|_{L^p(\Omega)} \||\delta U|^{p-1}\|_{L^{p'}(\Omega)} \\
        &\le \|H - \widetilde{H}  \|_{L^p(\Omega)} \|\delta U\|_{L^p(\Omega)}^{p-1}.
    \end{aligned}
    \end{equation*}
    By the classical Poincar\'e inequality, there exists a constant $C'$ that $$\|\delta U\|_{L^p(\Omega)} \le C_{p,\Omega} \|\nabla \delta U\|_{L^p(\Omega)} \le C' \| H - \widetilde{H} \|_{L^p(\Omega)},$$ where $C_{p,\Omega}$ is the Poincar\'e constant. Next, we observe that
    \begin{equation*}
       \Sigma_a(U) - \widetilde{\Sigma}_a(\widetilde{U})  = \frac{H - \widetilde{H} -\delta U \widetilde{\Sigma}_a(\widetilde{U})}{U}\,.
    \end{equation*}
    Using this, and the fact that $U\ge c>0$, we obtain that
    \begin{equation*}
          \| \Sigma_a(U) - \widetilde{\Sigma}_a(\widetilde{U}) \|_{L^p(\Omega)} \le C \| H - \widetilde{H}\|_{L^p(\Omega)}
    \end{equation*}
    for a certain constant $C$. The proof is complete.
\end{proof}

\subsection{Transport regime}

Because of the well-known connection between diffusion and transport models, it seems natural to ask for a similar stability estimate for the transport model. However, due to the difficulty caused by the anisotropy in the solution to the transport model, the existing efforts in finding such stability estimates require either restrictions in data~\cite{zhao2021quantitative,ReZh-SIAM21} or arbitrarily many measurements through a linearization technique~\cite{LaReZh-SIAM22}. In this section, we will show the $L^1$ stability estimate for the transport model without making further assumptions other than condition $(\cA)$. 

\begin{theorem}\label{thm: pat}
Let $u$ and $\widetilde{u}$ be the corresponding solutions to~\eqref{eq: m-rte} with absorption coefficients $\Sigma_a$ and $\widetilde{\Sigma}_a$, respectively. The associated data are denoted as $H$ and $\widetilde{H}$, respectively.  Assume further that $\Sigma_a$ and $\Sigma_a$ satisfy the condition $(\cA)$. Then $H = \widetilde{H}$ implies $\Sigma_a( \aver{u}) = \widetilde{\Sigma}_a( \aver{\widetilde{u}})$. Moreover, there exists a constant $C > 0$ such that 
\begin{equation*}
  \left\| \frac{ \Sigma_a( \aver{u}) - \Sigma_a( \aver{\widetilde{u}}) } {\Sigma_a(\aver{u})}\right\|_{L^1(\Omega)} \le C \left \|\frac{H - \widetilde{H}}{\Sigma_a(\aver{u})}\right \|_{L^1(\Omega)}.
\end{equation*}
The constant $C$ depends on $\|f_{-}\|_{\infty}$.
\end{theorem}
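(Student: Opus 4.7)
My plan relies on two elementary relations. Subtracting the two transport equations gives the equation for $\phi := u-\widetilde u$:
\[ v\cdot\nabla\phi + (\Sigma_a(\aver u)+\Sigma_s)\phi - \Sigma_s\cK\phi = -\bigl(\Sigma_a(\aver u)-\widetilde\Sigma_a(\aver{\widetilde u})\bigr)\widetilde u, \qquad \phi|_{\Gamma_-}=0.\]
Adding and subtracting $\Sigma_a(\aver u)\aver{\widetilde u}$ in the definition of $H-\widetilde H = \Sigma_a(\aver u)\aver u - \widetilde\Sigma_a(\aver{\widetilde u})\aver{\widetilde u}$ gives the pointwise algebraic identity
\[ H-\widetilde H \;=\; \Sigma_a(\aver u)\,\aver\phi \;+\; \bigl(\Sigma_a(\aver u)-\widetilde\Sigma_a(\aver{\widetilde u})\bigr)\,\aver{\widetilde u}.\]

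The main energy estimate comes from testing the $\phi$ equation against $\mathrm{sgn}(\phi)$ and integrating over $X$. The vanishing inflow trace reduces the boundary contribution to the non-negative $\int_{\Gamma_+} n\cdot v\,|\phi|\,dv\,dS$. The scattering contribution $\int_X\Sigma_s[\,|\phi|-(\cK\phi)\mathrm{sgn}(\phi)\,]\,dx\,dv$ is non-negative, using the pointwise bound $(\cK\phi)\mathrm{sgn}(\phi)\le|\cK\phi|\le\cK|\phi|$ together with the normalisation $\int p(x,v,v')\,dv=1$, which gives $\int_v\cK|\phi|\,dv=\aver{|\phi|}$. Consequently
\[ \int_{\Gamma_+} n\cdot v\,|\phi|\,dv\,dS + \int_\Omega \Sigma_a(\aver u)\,\aver{|\phi|}\,dx \;\le\; \int_\Omega \bigl|\Sigma_a(\aver u)-\widetilde\Sigma_a(\aver{\widetilde u})\bigr|\,\aver{\widetilde u}\,dx.\]

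For the uniqueness claim, setting $H=\widetilde H$ in the algebraic identity forces $(\Sigma_a-\widetilde\Sigma_a)\aver{\widetilde u}=-\Sigma_a\aver\phi$, so the right-hand side of the energy inequality becomes $\int_\Omega\Sigma_a|\aver\phi|\,dx\le\int_\Omega\Sigma_a\aver{|\phi|}\,dx$. All inequalities in the energy chain are thereby squeezed into equalities, which simultaneously forces $\phi$ to vanish on $\Gamma_+$, the scattering defect to vanish, and $\aver{|\phi|}=|\aver\phi|$ (so $\phi(x,\cdot)$ has constant sign in $v$ at each $x$). Combining $\phi\equiv 0$ on all of $\partial\Omega\times\bbS^{d-1}$ with the transport equation then forces $\phi\equiv 0$ via a characteristic argument, and hence $\Sigma_a(\aver u)=H/\aver u=\widetilde H/\aver{\widetilde u}=\widetilde\Sigma_a(\aver{\widetilde u})$.

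For the quantitative bound, divide the algebraic identity by $\Sigma_a(\aver u)$ to obtain
\[ \frac{\Sigma_a(\aver u)-\widetilde\Sigma_a(\aver{\widetilde u})}{\Sigma_a(\aver u)}\,\aver{\widetilde u} + \aver\phi \;=\; \frac{H-\widetilde H}{\Sigma_a(\aver u)},\]
and combine with the energy estimate, using the uniform lower bound $\aver{\widetilde u}\ge\mathsf c>0$ from Lemma~\ref{lem: bound} together with the uniform bounds on $\Sigma_a$ supplied by condition $(\cA)$, so as to absorb $\|\aver\phi\|_{L^1}$ into the other two terms. The main obstacle I foresee is precisely the sign cancellation in the rewritten identity: $\aver\phi$ and $(\Sigma_a-\widetilde\Sigma_a)\aver{\widetilde u}/\Sigma_a$ typically point in opposite directions, because a pointwise increase in absorption produces a pointwise decrease in density, so the naive triangle inequality loses crucial information. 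Closing the estimate requires retaining the non-negative boundary-flux contribution $\int_{\Gamma_+}n\cdot v|\phi|$ on the left of the energy inequality and exploiting the weight $1/\Sigma_a(\aver u)$ so that the uniform ellipticity $\Sigma_a\ge\underline\Sigma_a>0$ converts what would otherwise be a cancelled combination into genuine one-sided control of the reconstructed absorption coefficient.
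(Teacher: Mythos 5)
Your setup is the same as the paper's (the perturbation equation for $\phi=u-\widetilde u$ and the identity $H-\widetilde H=\Sigma_a(\aver u)\aver\phi+\delta\Sigma_a\aver{\widetilde u}$), and you have correctly diagnosed the obstacle, but your proposed way around it is not a proof. Testing the $\phi$-equation against $\mathrm{sgn}(\phi)$ yields the borderline estimate $\int_\Omega\Sigma_a(\aver u)\aver{|\phi|}\,dx\le\int_\Omega|\delta\Sigma_a|\,\aver{\widetilde u}\,dx$ with constant exactly $1$. Feeding this into the triangle inequality $|\delta\Sigma_a|\aver{\widetilde u}\le\Sigma_a|\aver\phi|+|H-\widetilde H|$ then produces a term $\int_\Omega\Sigma_a\aver{|\phi|}$ on the right that is bounded by the full left-hand side, so the two sides cancel and nothing survives except a bound on the boundary flux: keeping $\int_{\Gamma_+}n\cdot v\,|\phi|$ gives only $\int_{\Gamma_+}n\cdot v\,|\phi|\le\|H-\widetilde H\|_{L^1}$, which controls the outflow, not $\delta\Sigma_a$. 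The weight $1/\Sigma_a(\aver u)$ does not repair this either, since it introduces the ratio $\sup\Sigma_a/\underline\Sigma_a\ge 1$ in the wrong direction. What is genuinely needed, and what the paper supplies, is a solution operator with $L^1$ norm \emph{strictly} less than $1$: writing $\delta u$ by Duhamel along characteristics and changing variables $y=x-sv$, the kernel integrates to $1-E(y+\tau_+(y,v)v,y)\le\mu:=1-e^{-\operatorname{diam}(\Omega)(g(\|f_-\|_\infty)+\overline\Sigma_s)}<1$ because particles leak through the boundary. This strict contraction lets the scattering term be absorbed and leaves a positive coefficient $(1-\mu)$ in front of $|\delta\Sigma_a\aver{\widetilde u}/\Sigma_a|$, which is exactly the gap your energy method cannot produce.

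The uniqueness step also has a flaw independent of the above: from the equality chain you may conclude $\phi|_{\Gamma_+}=0$ and the various pointwise sign conditions, but the claim that $\phi=0$ on all of $\partial\Omega\times\bbS^{d-1}$ together with the transport equation forces $\phi\equiv0$ ``via a characteristic argument'' is false when the equation carries an interior source: along a characteristic, $\phi'+\Sigma_t\phi=q$ with $\phi$ vanishing at both endpoints constrains $q$, not $\phi$. In the paper, uniqueness ($H=\widetilde H\Rightarrow\delta\Sigma_a=0$) is not a separate argument at all; it falls out of the quantitative $L^1$ estimate once the $(1-\mu)$ gap is in hand, since $\widetilde u$ is bounded below by Lemma~\ref{lem: bound}. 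I would recommend replacing the $\mathrm{sgn}(\phi)$ energy identity with the characteristics representation and the change-of-variables computation in~\eqref{eq: key}.
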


\begin{proof}
Denote $\delta \Sigma_a = \Sigma_a( \aver{u}) - \widetilde{\Sigma}_a(\aver{\widetilde{u}})$, and $\delta u = u - \widetilde{u}$. Let $\Sigma_t(x) := \Sigma_a( \aver{u}) + \Sigma_s(x)$. Then the perturbation $\delta u$ satisfies the transport equation
\begin{equation}\label{eq: delta u}
  \begin{aligned}
         v\cdot \nabla \delta u + \Sigma_t(x) \delta u - \Sigma_s(x)\cK \delta u &= -\delta\Sigma_a \widetilde{u}\quad &&\text{ in }X, \\
         \delta u &= 0\quad &&\text{ on }\Gamma_{-}.
  \end{aligned}
\end{equation}
We verify also that $H - \widetilde{H} = \Sigma_a( \aver{u}) \aver{\delta u} + \delta\Sigma_a \aver{\widetilde{u}}$.
Therefore, we have
    \begin{equation*}
        |\aver{\delta u}| = \left|\frac{\delta\Sigma_a}{\Sigma_a( \aver{u})} \aver{\widetilde{u}}- \frac{H - \widetilde{H}}{\Sigma_a( \aver{u})}\right| \ge \left|\frac{\delta\Sigma_a}{\Sigma_a( \aver{u})} \aver{\widetilde{u}}\right| - \left|\frac{H - \widetilde{H}}{\Sigma_a( \aver{u})}\right|.
    \end{equation*}
Let us denote by $\tau_{\mp}(x, v)$ the distance from $x$ to boundary traveling in $\mp v$ direction, that is, 
\begin{equation*}
    \tau_{\mp}(x, v) = \sup\left\{s\in \bbR \mid x \mp sv \in \Omega\right\},
\end{equation*}
and denote $E(x, x-sv)$ by
\begin{equation*}
    E(x, x-sv) := \exp\left(-\int_0^s \Sigma_t(x-tv)  dt \right).
\end{equation*}
We further introduce the constant $\mu \in (0, 1)$ as 
$$\mu := \sup_{(y, v)\in X} (1 - E(x, x- sv)) \le 1 - e^{-\operatorname{diam}(\Omega)(g(\|f_{-}\|_{\infty}) + \overline{\Sigma}_s)}. $$
Then, we have
\begin{equation}\label{eq: less}
     \Sigma_t\aver{|\delta u|} \ge \mu {\Sigma_s}\aver{|\delta u|} + \big( {\Sigma_a + (1 - \mu)\Sigma_s}\big) \left( \left|\frac{\delta\Sigma_a}{\Sigma_a( \aver{u})} \aver{\widetilde{u}}\right| - \left|\frac{H - \widetilde{H}}{\Sigma_a( \aver{u})}\right| \right).
\end{equation}
We now solve the equation~\eqref{eq: delta u} along the characteristics and integrate over $\bbS^{d-1}$ to get
\begin{equation*}
\begin{aligned}
  \aver{|\delta u|}(x) &= \int_{\bbS^{d-1}}|\delta u(x, v)|dv \\ &=  \int_{\bbS^{d-1}} \left| \int_0^{\tau_{-}(x, v)} E(x, x-sv)   \left(- \delta\Sigma_a(x-sv) \widetilde{u}(x-sv, v) + \Sigma_s \cK \delta u (x-sv, v)\right) ds \right| dv\\
  &\le\int_{\bbS^{d-1}} \int_0^{\tau_{-}(x, v)} E(x, x-sv)   |-\delta\Sigma_a(x-sv) \widetilde{u}(x-sv, v)+ \Sigma_s \cK \delta u (x-sv, v) | ds dv.
\end{aligned}
\end{equation*}
We multiply the above inequality by $\Sigma_t$ and integrate over $\Omega$. Then, under the change of variable $y = x-sv$, the right-hand side becomes
\begin{equation}\label{eq: key}
\begin{aligned}
&\int_{\bbR^d}\int_{\bbS^{d-1}} \int_0^{\tau_{-}(x, v)} E(x, x-sv) \Sigma_t(x)  |-\delta\Sigma_a(x-sv)\widetilde{u}(x-sv, v) + \Sigma_s \cK \delta u  (x-sv, v)|ds dv dx \\
&=\int_{\bbR^d}\int_{\bbS^{d-1}} \int_0^{\tau_{+}(y, v)} E(y+ sv, y)  \Sigma_t(y+sv) |-\delta\Sigma_a(y) \widetilde{u}(y, v) + \Sigma_s \cK \delta u (y, v)|ds dv dy \\
&= \int_{\Omega}\int_{\bbS^{d-1}} \left(1 - E(y + \tau_{+}(y, v), y)\right) {|-\delta\Sigma_a(y) \widetilde{u}(y, v) + \Sigma_s \cK \delta u (y, v)|} dv dy\\
&\le\mu \int_{\Omega}\int_{\bbS^{d-1}} {|-\delta\Sigma_a(y) \widetilde{u}(y, v) + \Sigma_s \cK \delta u (y, v)|} dv dy \\
&\le \mu \int_{\Omega}\int_{\bbS^{d-1}} {|-\delta\Sigma_a(y) \widetilde{u}(y, v)|} dv dy + \mu \int_{\Omega}\int_{\bbS^{d-1}} {|\Sigma_s \cK \delta u (y, v)|} dv dy \\
&\le \mu \int_{\Omega} \left|{\delta\Sigma_a(y) \aver{\widetilde{u}}(y)}\right| dy + \mu \int_{\Omega} \left|{\Sigma_s(y)}\aver{|\delta u|}(y)\right| dy.
\end{aligned}
\end{equation}
Combining this with~\eqref{eq: less}, we get 
\begin{equation*}
\begin{aligned}
   \mu \int_{\Omega} \left|{\delta\Sigma_a(y) \aver{\widetilde{u}}(y)}\right| dy  &\ge \int_{\Omega} ({\Sigma_a + (1-\mu)\Sigma_s})\left( \left|\frac{\delta\Sigma_a}{\Sigma_a( \aver{u})} \aver{\widetilde{u}}\right| - \left|\frac{H - \widetilde{H}}{\Sigma_a( \aver{u})}\right| \right) dx.
\end{aligned}
\end{equation*}
Hence, we obtain the $L^1$ stability estimate
\begin{equation*}
\int_{\Omega} \left({(1-\mu)^{-1}\Sigma_a + \Sigma_s}\right) \left|\frac{H - \widetilde{H}}{\Sigma_a}\right| dy \ge  \int_{\Omega} \left|\Sigma_t(y) \frac{\delta\Sigma_a(y) \aver{\widetilde{u}}(y)}{\Sigma_a(y)}\right| dy.
\end{equation*}
The rest follows from the assumption that $\Sigma_a$ and $\widetilde \Sigma_a$ satisfy condition ($\cA$).
\end{proof}

The above result leads to the following corollary when $\Sigma_a$ takes the multi-photon absorption form~\eqref{EQ:MPA Model}:
\begin{equation}\nonumber
    \Sigma_a(|\aver{u}|) := \sum_{k=0}^K \sigma_{a, k}(x) |\aver{u}|^k,
\end{equation}
\begin{corollary}\label{COR: unique}
Let $\Sigma_a$ and $\widetilde \Sigma_a$ be absorption coefficients of the polynomial form.
Let $\{H_i\}_{i=0}^{K}$ and $\{\widetilde H_i\}_{i=0}^{K}$ be data associated with coefficients $\Sigma_a$ and $\widetilde \Sigma_a$, respectively, generated with illumination source $\{f_i\}_{i=0}^{K}$ satisfying the condition
\[
0 < f_{0} < f_{1} < \cdots < f_{K}\,, \quad \forall (x, v)\in\Gamma_{-}
\]
such that $$\sum_{k=1}^K  k \|f_K\|^{k}_{L^{\infty}(\Gamma_{-})} \sigma_{a, k}(x) \le \Sigma_s(x) \inf_{v, v'\in\bbS^{d-1}} p(x, v, v'),$$ where $p(x, v,v')$ is the scattering kernel of $\cK$.
Then 
\[
\{H_i\}_{i=0}^{K}=\{\widetilde H_i\}_{i=0}^{K} \implies
    \{\sigma_{a, k}\}_{k=0}^K =\{\widetilde \sigma_{a, k}\}_{k=0}^K\,.
\]
\end{corollary}
\begin{proof}
Let $u_i$ be the unique solution from source $f_{i}$, and define $\phi_j = u_{j+1} - u_{j}$ ($0\le j\le K-1$). Then we verify that $\phi_i$ satisfies the transport equation
\begin{equation*}
\begin{aligned}
    v\cdot \nabla \phi_j + \left(\Sigma_a(\aver{u_{j+1}}) + \Sigma_s \right)\phi_j &= \Sigma_s \cK \phi_j -  u_j \frac{\Sigma_a(\aver{u_{j+1}}) - \Sigma_a(\aver{u_{j}}) }{\aver{u_{j+1}} - \aver{u_{j}}} \aver{\phi_j}, \quad &&\text{ in } X, \\
    \phi_j &= f_{j+1} - f_{j} > 0, \quad &&\text{ on }\Gamma_{-}.
\end{aligned}
\end{equation*}
The second term on the right-hand side can be viewed as another scattering operator $\cP$ that
\begin{equation*}
    \cP \phi(x, v) = \int_{\bbS^{d-1}} \frac{\Sigma_a(\aver{u_{j+1}}) - \Sigma_a(\aver{u_{j}}) }{\aver{u_{j+1}} - \aver{u_{j}}} u_j(x, v) \phi(x, v') dv', 
\end{equation*}
Therefore, when $\Sigma_s(x)  p(x, v, v') - \frac{\Sigma_a(\aver{u_{j+1}}) - \Sigma_a(\aver{u_{j}}) }{\aver{u_{j+1}} - \aver{u_{j}}} u_j(x, v)  \ge 0$, we can write the whole right-hand side as a single scattering operator
\begin{equation*}
\cK' \phi(x, v) = \int_{\bbS^{d-1}}   \left( \Sigma_s(x)  p(x, v, v') - \frac{\Sigma_a(\aver{u_{j+1}}) - \Sigma_a(\aver{u_{j}}) }{\aver{u_{j+1}} - \aver{u_{j}}} u_j(x, v) \right)\phi(x, v') dv',
\end{equation*}
where the kernel is nonnegative from the assumption.
This leads to $\phi_j > 0$ ($0\le j\le K-1$) from the standard transport theory. Therefore, we have that
\begin{equation}\label{EQ:Monotone u}
   \aver{ u_{0} }  < \aver{ u_1 }  < \cdots < \aver{ u_{K} }.
\end{equation}
Meanwhile, by Theorem~\ref{thm: pat}, we have that $H_i=\widetilde H_i$ implies $\Sigma_a=\widetilde \Sigma_a$, and therefore $u_i=\widetilde u_i$ ($0\le i\le K$).

We therefore have the following system of equations for $\{\sigma_{a,k}-\widetilde \sigma_{a,k}\}_{k=0}^K$:
\[
    (\sigma_{a,0}-\widetilde \sigma_{a,0}) + (\sigma_{a,1}-\widetilde\sigma_{a,1})\aver{u_i}+\cdots+(\sigma_{a,K}-\widetilde \sigma_{a,K})\aver{u_i}^K=0,\quad 0\le i\le K
\]
By the monotonicity of $\aver{u_i}$ in~\eqref{EQ:Monotone u}, the system admits only zero solutions $\sigma_{a,k}-\widetilde \sigma_{a,k}=0$ ($0\le k\le K$). The conclusion then follows.
\end{proof}
\begin{remark}
    For the diffusion model~\eqref{eq: diff}, if the nonlinear absorption $\Sigma_a(U)$ follows the polynomial model~\eqref{EQ:MPA Model}, then the result in Corollary~\ref{COR: unique} also holds. This follows from the comparison principle~\cite{gilbarg_elliptic_2000}. To be precise, if $U_j$ and $U_{j+1}$ are two solutions with boundary source functions $0 < f_j < f_{j+1}$, respectively. Then $\phi = U_{j+1} - U_j$ satisfies the following elliptic equation 
    \begin{equation*}
        -\nabla (D(x) \nabla \phi) + \left( \Sigma_{a}(U_{j+1}) + C(x) U_j \right) \phi = 0,
    \end{equation*}
    where $C(x) = \frac{\Sigma_a(U_{j+1}) - \Sigma_a(U_j)}{U_{j+1} - U_j} \ge 0$, and $\phi > 0$ on the boundary. Therefore, $\phi = U_{j+1} - U_j > 0$ on the whole domain. However, the comparison (and maximum) principle does not hold for~\eqref{eq: m-rte} in the transport regime for large source functions. 
\end{remark}

We point out that the monotonicity is not necessary for the uniqueness result in Corollary~\ref{COR: unique} to hold. As long as we can find the illumination sources $\{ f_i \}_{i=0}^K$ such that the set $\{x\in\Omega\mid \aver{u_i}\neq \aver{u_j}\}$ is dense for each pair $i\neq j$, then the result still holds. However, it is more difficult to show that this holds in the general transport regime, although it seems obvious.


Using Riesz-Thorin interpolation theorem~\cite{stein1971introduction} and H\"older inequality, we can easily derive the following $L^p$ stability estimate for the transport model.
\begin{corollary}
Under the same conditions as Theorem~\ref{thm: pat}, there exists a constant $C > 0$ such that 
\begin{equation*}
  \|  \Sigma_a( \aver{u}) - \widetilde{\Sigma}_a( \aver{\widetilde{u}})\|_{L^p(\Omega)} \le C  |\operatorname{Vol}(\Omega)|^{\frac{1}{p}(1-\frac{1}{p})} \left[ g(\|f_{-}\|_{\infty})\right]^{1-\frac{1}{p}}\cdot \|H - \widetilde{H}\|_{L^p(\Omega)}^{\frac{1}{p}}.
\end{equation*}
\end{corollary}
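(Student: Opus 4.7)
The plan is to derive the $L^p$ bound by interpolating between the $L^1$ stability already established in Theorem~\ref{thm: pat} and a trivial a priori $L^\infty$ bound on the coefficient difference, and then to convert the resulting $L^1$-norm of $H-\widetilde H$ on the right-hand side into an $L^p$-norm via H\"older's inequality. Concretely, writing $A:=\Sigma_a(\aver{u})-\widetilde\Sigma_a(\aver{\widetilde u})$ and $B:=H-\widetilde H$, the target inequality splits into three ingredients: an unweighted $L^1$ bound $\|A\|_{L^1}\lesssim \|B\|_{L^1}$, an $L^\infty$ bound $\|A\|_{L^\infty}\lesssim g(\|f_-\|_\infty)$, and the log-convexity identity $\|A\|_{L^p}\le \|A\|_{L^1}^{1/p}\|A\|_{L^\infty}^{1-1/p}$ (which is the Riesz--Thorin content for the identity map between $L^p$ spaces, or equivalently a direct application of H\"older to the splitting $|A|^p=|A|\cdot|A|^{p-1}$).

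The first ingredient is immediate from Theorem~\ref{thm: pat}: condition $(\cA)$ supplies $\underline{\Sigma}_a\le \Sigma_a(\aver{u})\le g(\|f_-\|_\infty)$, so the weights $\Sigma_a(\aver{u})^{-1}$ appearing on both sides of the weighted $L^1$ estimate are pointwise comparable to constants, and the weights can be absorbed into a new constant. The second ingredient uses the maximum principle for~\eqref{eq: mpa-rte} (via Lemma~\ref{lem: bound}) to conclude $\|\aver{u}\|_\infty,\|\aver{\widetilde u}\|_\infty\le \|f_-\|_\infty$, whence $(\cA)$ and the monotonicity of $g$ yield $\|\Sigma_a(\aver{u})\|_\infty,\|\widetilde\Sigma_a(\aver{\widetilde u})\|_\infty\le g(\|f_-\|_\infty)$, so $\|A\|_{L^\infty}\le 2g(\|f_-\|_\infty)$ by the triangle inequality.

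With those bounds in hand, the log-convexity inequality gives
\[
\|A\|_{L^p(\Omega)}\le \|A\|_{L^1(\Omega)}^{1/p}\|A\|_{L^\infty(\Omega)}^{1-1/p}
\le C_1\,\|B\|_{L^1(\Omega)}^{1/p}\,\bigl[g(\|f_-\|_\infty)\bigr]^{1-1/p},
\]
and a single application of H\"older's inequality on the finite-measure domain $\Omega$ produces
\[
\|B\|_{L^1(\Omega)}\le |\operatorname{Vol}(\Omega)|^{\,1-1/p}\,\|B\|_{L^p(\Omega)}.
\]
Substituting this into the previous display and collecting the volume exponent as $\tfrac{1}{p}(1-\tfrac{1}{p})$ yields precisely the claimed bound. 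There is no real obstacle here, since the nonlinearity has already been absorbed in Theorem~\ref{thm: pat}; the only point requiring a brief comment is verifying that the weights in the $L^1$ statement of Theorem~\ref{thm: pat} can be removed at the cost of a harmless multiplicative constant, which is where hypothesis $(\cA)$ enters one last time.
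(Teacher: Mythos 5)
Your proposal is correct and follows the same route the paper indicates (Riesz--Thorin/log-convexity between the $L^1$ estimate of Theorem~\ref{thm: pat} and an $L^\infty$ bound from condition $(\cA)$, followed by H\"older on the finite-measure domain to trade $\|H-\widetilde H\|_{L^1}$ for $\|H-\widetilde H\|_{L^p}^{1/p}$); the exponents $\frac{1}{p}(1-\frac{1}{p})$ and $1-\frac{1}{p}$ come out exactly as claimed. The only point worth noting is that stripping the weights $\Sigma_a(\aver{u})^{-1}$ from Theorem~\ref{thm: pat} costs an extra factor of order $g(\|f_-\|_\infty)/\underline{\Sigma}_a$ which you (legitimately) absorb into $C$, so the displayed $[g(\|f_-\|_\infty)]^{1-1/p}$ does not account for all of the $g$-dependence --- but the corollary only asserts existence of some constant, so this is harmless.
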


\section{Transition of stability regimes}\label{Sec: trans}

We study in this section the transition of stability from the transport to the diffusive regimes. We refer interested readers to~\cite{LaLiUh-SIAM19,ZhZh-SIAM19} for related investigations.

Let us consider the diffusion limit process of the transport model: 
\begin{equation}\label{eq: scale rte}
\begin{aligned}
    v\cdot \nabla u +  \Sigma_{a,\eps}(\aver{u}) u &=\Sigma_{s,\eps} (\cK u - u), &\text{ in }&X \\\quad u(x,v)&= f_{-},\quad &\text{ on }&\Gamma_{-},
\end{aligned}
\end{equation}
where $\Sigma_{a,\eps}:= \eps \Sigma_a$ and $\Sigma_{s, \eps}:=\eps^{-1}\Sigma_s$ are the \textit{scaled coefficients}, the parameter $\eps$ is the Knudsen number, which is the ratio between the mean free path and the characteristic size of the domain. Then the constant of the $L^1$ stability estimate in Theorem~\ref{thm: pat} becomes $$\frac{(1-\mu)^{-1}\Sigma_{a,\eps} + \Sigma_{s,\eps}}{\Sigma_{s,\eps} + \Sigma_{a,\eps}}\sim \frac{\Sigma_a}{\Sigma_s}\eps^2 \exp\left(\operatorname{diam}(\Omega) \cdot ( \eps g(\|f_{-}\|_{\infty}) + \frac{1}{\eps}\overline{\Sigma}_s ) \right), $$
which grows exponentially fast to infinity as $\eps\to 0^{+}$, leading to a disappointing discrepancy from the result in the diffusion regime in Theorem~\ref{thm: diff-pat}. This is partially caused by the boundary layer effect, which causes the solution to deviate from its diffusion limit. In the following, we show that the gap can be closed by introducing a damping factor near the boundary under the following assumptions.
 
\begin{enumerate}
    \item [$(\cD)$.] The scattering is isotropic, that is, $\cK f = \aver{f}$.
    \item [$(\cE)$.]  Let $k>\frac{d}{2}+2$. The absorption coefficient $\Sigma_a(\aver{u})$ is a polynomial: 
    \begin{equation*}
    \Sigma_a(\aver{u}) = \sum_{j=0}^{n} \Sigma_{a, j} |\aver{u}|^j,    
    \end{equation*}
    where all coefficients $\Sigma_{a, j}\in C^{k, 1}(\overline{\Omega})$, $j=0,1,\cdots, n$, are strictly positive.
    The scattering coefficient $\Sigma_{s}\in C^{k, 1}(\overline{\Omega})$ is strictly positive and $\partial\Omega\in C^{k+2}$. The illumination function $f_{-}(x, v)= f_0(x)\big|_{\partial\Omega}$, where $f_0\in H^{k+2}(\bbR^d)$ is a positive function. 
    
\end{enumerate}

\subsection{Preliminaries}

We first prove some preliminaries under the assumption that $\Sigma_{a}\in C^{k, 1}(\overline{\Omega})$. Later, we will relax this assumption. Let $\cP_{\eps}$ be the integral operator defined through the relation
\begin{equation*}
\begin{aligned}
    \cP_{\eps} f(x) := \int_{\bbS^{d-1}} \int_0^{\tau_{-}(x,v)}E_{\eps}(x, x-sv) {\Sigma_{t,\eps}(x-sv)} f(x-sv) ds dv,
\end{aligned}
\end{equation*}
where $\Sigma_{t,\eps}:= \Sigma_{a,\eps} + \Sigma_{s,\eps} = \eps\Sigma_a + \eps^{-1}\Sigma_s$ and $E_{\eps}(x, x-sv)$ is defined by
\begin{equation*}
    E_{\eps}(x, x-sv) := \exp\left(-\int_0^s \Sigma_{t,\eps}(x-tv) dt \right).
\end{equation*}
The operator $\cP_{\eps}$ is also known as the Peierls integral operator.
Next, we present a useful result about $\cP_{\eps}$. The calculations can be found in~\cite{ReZhZh-JCP19,ZhZh-SIAM19}. 
\begin{lemma}\label{Lem: Pei}
 $\cP_{\eps}: L^2(\Omega)\to L^2(\Omega)$ is a compact, self-adjoint integral operator. Under a polar coordinate transform,
\begin{equation*}
    \cP_{\eps} f(x) = \frac{1}{\nu_{d}} \int_{\Omega} \frac{E_{\eps}(x, y)}{|x - y|^{d-1}} {\Sigma_{t,\eps}(y)} f(y) dy,
\end{equation*}
where $\nu_k$ denotes the area of the unit sphere in $\bbR^d$.
\end{lemma}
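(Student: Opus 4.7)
The plan is to verify the three claims---polar-coordinate representation, self-adjointness, and compactness---in turn. The polar-coordinate identity follows from a direct change of variables $y = x - sv$ with $(s, v) \in [0, \tau_{-}(x, v)] \times \bbS^{d-1}$, which parametrizes $\Omega$ bijectively. Using the normalization $dv = d\sigma/\nu_d$ together with the standard polar identity $dy = s^{d-1}\,ds\,d\sigma$, one obtains $ds\,dv = \frac{1}{\nu_d |x-y|^{d-1}}\,dy$; substituting into the definition of $\cP_\eps$ immediately yields the stated kernel $K(x,y) = \frac{1}{\nu_d}\frac{E_\eps(x,y)}{|x-y|^{d-1}}\Sigma_{t,\eps}(y)$.

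For the self-adjointness, the starting observation is the symmetry $E_\eps(x,y) = E_\eps(y,x)$: substituting $r = |x-y| - t$ in the optical-depth integral converts the line integral of $\Sigma_{t,\eps}$ from $x$ toward $y$ into the one from $y$ toward $x$, so both give the same value. Since the kernel $K(x,y)$ still contains the asymmetric factor $\Sigma_{t,\eps}(y)$, a Fubini computation shows that $\cP_\eps$ is self-adjoint in the weighted inner product $(f,g)_w := \int_\Omega fg\,\Sigma_{t,\eps}\,dx$: the required identity $K(x,y)\,\Sigma_{t,\eps}(x) = K(y,x)\,\Sigma_{t,\eps}(y)$ reduces, after cancelling the symmetric piece $E_\eps/|x-y|^{d-1}$, to the tautology $\Sigma_{t,\eps}(y)\Sigma_{t,\eps}(x) = \Sigma_{t,\eps}(x)\Sigma_{t,\eps}(y)$. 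Under assumption $(\cE)$, $\Sigma_{t,\eps}$ is bounded above and below by positive constants, so this weighted $L^2$ is topologically equivalent to $L^2(\Omega)$; equivalently, the conjugated operator $\Sigma_{t,\eps}^{1/2}\cP_\eps\Sigma_{t,\eps}^{-1/2}$ has the manifestly symmetric kernel $\frac{E_\eps(x,y)}{\nu_d |x-y|^{d-1}}\Sigma_{t,\eps}^{1/2}(x)\Sigma_{t,\eps}^{1/2}(y)$ and is self-adjoint in the usual sense.

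For compactness, since $E_\eps$ and $\Sigma_{t,\eps}$ are uniformly bounded one has $|K(x,y)| \le C|x-y|^{-(d-1)}$, a weakly singular kernel whose row and column integrals $\int_\Omega |x-y|^{-(d-1)}\,dy$ are uniformly bounded by a multiple of $\operatorname{diam}(\Omega)$. Introduce the truncation $K_\delta(x,y) := K(x,y)\mathbf{1}_{\{|x-y|\ge\delta\}}$; this kernel is bounded on $\Omega\times\Omega$, hence lies in $L^2(\Omega\times\Omega)$, so the corresponding operator $\cP_\eps^\delta$ is Hilbert--Schmidt and in particular compact. Applying the Schur test to $K - K_\delta$, whose row and column integrals are each bounded by $C\nu_d\,\delta$, gives $\|\cP_\eps - \cP_\eps^\delta\|_{L^2\to L^2} \lesssim \delta$. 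Hence $\cP_\eps$ is the operator-norm limit of compact operators and therefore itself compact.

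The main subtlety lies in the self-adjointness claim, which is not literal pointwise symmetry of the kernel $K(x,y)$ because of the one-sided factor $\Sigma_{t,\eps}(y)$; one must first identify the correct weighted inner product (or equivalently perform the diagonal conjugation above) before invoking Fubini. Once that interpretive point is settled, the remaining arguments are routine and essentially reproduce the calculations in the cited references \cite{ReZhZh-JCP19, ZhZh-SIAM19}.
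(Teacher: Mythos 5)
Your proof is correct, and it fills in exactly the calculation the paper delegates to the cited references: the change of variables $y=x-sv$ with the normalized measure $dv=d\sigma/\nu_d$ gives the stated kernel, and compactness follows from the standard truncation-plus-Schur-test argument for the weakly singular kernel $|x-y|^{-(d-1)}$. Your observation that $\cP_\eps$ is self-adjoint only with respect to the $\Sigma_{t,\eps}$-weighted inner product (equivalently, that $\Sigma_{t,\eps}^{1/2}\cP_\eps\Sigma_{t,\eps}^{-1/2}$ has a symmetric kernel) is the correct reading of the lemma and is precisely how the paper itself uses the result later, when it introduces $\cP_{\eps,\mathrm{sym}}$ before invoking the min--max principle in Theorem~\ref{Thm: eigen}.
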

\begin{definition}
    Under the assumption $(\cE)$, we define $\cA_{\eps}$ as the differential operator 
    \begin{equation*}
        \cA_{\eps} f(x) = -\nabla\cdot \left(\frac{1}{\Sigma_{t,\eps}(x)} \nabla f(x)\right). 
    \end{equation*}
    Let $\lambda_{\eps} > 0$ be the first Dirichlet eigenvalue of $\cA_{\eps}$. Then $\lambda_{\eps}$ is simple and $\lambda_{\eps} = \Theta(\eps)$ in the sense of big $\Theta$ notation, that is, there exists constants $c_1, c_2 > 0$ independent of $\eps$ that $c_1 \eps <\lambda_{\eps} < c_2\eps $. We denote by $\Phi_{\eps}$ the corresponding eigenfunction, which is normalized and is strictly positive in the interior of $\Omega$.
\end{definition}
Then, we have $\Phi_{\eps} \in H^{k+2}(\Omega)$, see~\cite[Theorem 8.13]{gilbarg_elliptic_2000}, and by Sobolev embedding, we can have $\Phi_{\eps}\in C^4(\Omega)$ with a uniform bound $\|\Phi_{\eps}\|_{C^4(\Omega)} < C < \infty$ for all $\eps > 0$. 

\begin{lemma}\label{LEM: Limit}
As $\eps \to 0$, the eigenvalue satisfies $\eps^{-1}\lambda_{\eps}\to \lambda^{\ast}$, where $\lambda^{\ast}$ corresponds to the first eigenvalue of the operator 
\begin{equation}\label{EQ: A STAR}
    \cA^{\ast} f = -\nabla \cdot \left(\frac{1}{\Sigma_s(x)} \nabla f(x) \right),
\end{equation}
The eigenfunction $\Phi_{\eps}$ converges to the first eigenfunction of $\cA^{\ast}$.
\end{lemma}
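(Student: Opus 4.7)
The plan is to rescale the eigenvalue problem so that the limit becomes a standard perturbation-of-coefficients argument, then invoke the min-max characterization to get convergence of eigenvalues, and finally exploit elliptic regularity and simplicity of the first eigenvalue to upgrade to convergence of eigenfunctions.

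First I would rewrite the eigenvalue equation $\cA_\eps \Phi_\eps = \lambda_\eps \Phi_\eps$ in the equivalent form
\begin{equation*}
-\nabla \cdot \left( a_\eps(x) \nabla \Phi_\eps \right) = (\eps^{-1}\lambda_\eps)\,\Phi_\eps, \qquad a_\eps(x) := \frac{1}{\eps \Sigma_{t,\eps}(x)} = \frac{1}{\eps^2 \Sigma_a(x) + \Sigma_s(x)}.
\end{equation*}
Under condition $(\cE)$, the coefficient $a_\eps$ is in $C^{k,1}(\overline{\Omega})$, bounded uniformly away from $0$ and $\infty$ (say $0 < \alpha \le a_\eps \le \beta < \infty$ for all $\eps \in (0,1]$), and $a_\eps \to a_0 := 1/\Sigma_s$ uniformly on $\overline{\Omega}$ as $\eps \to 0$. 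Moreover, $a_\eps \to a_0$ in $C^{k,1}(\overline{\Omega})$.

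Next I would invoke the Rayleigh quotient characterization of the first Dirichlet eigenvalue:
\begin{equation*}
\eps^{-1}\lambda_\eps \;=\; \inf_{\substack{u \in H_0^1(\Omega) \\ u\neq 0}} \frac{\int_{\Omega} a_\eps(x) |\nabla u|^2\, dx}{\int_{\Omega} |u|^2\, dx},\qquad \lambda^\ast \;=\; \inf_{\substack{u \in H_0^1(\Omega) \\ u\neq 0}} \frac{\int_{\Omega} a_0(x) |\nabla u|^2\, dx}{\int_{\Omega} |u|^2\, dx}.
\end{equation*}
Uniform convergence $a_\eps \to a_0$ forces every Rayleigh quotient to converge: using the minimizer $\Phi^\ast$ of the right-hand side as a competitor gives $\limsup_\eps \eps^{-1}\lambda_\eps \le \lambda^\ast$, while the uniform lower bound $a_\eps \ge \alpha > 0$ and weak compactness of the normalized eigenfunctions give $\liminf_\eps \eps^{-1}\lambda_\eps \ge \lambda^\ast$.

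For the eigenfunctions, I would normalize $\|\Phi_\eps\|_{L^2(\Omega)}=1$ and use the uniform ellipticity together with the Rayleigh quotient bound to get $\{\Phi_\eps\}$ bounded in $H_0^1(\Omega)$. By Rellich compactness, extract a subsequence with $\Phi_\eps \to \Phi^\ast$ strongly in $L^2(\Omega)$ and weakly in $H_0^1(\Omega)$. Passing to the limit in the weak formulation of the rescaled equation, using the uniform convergence $a_\eps \to a_0$, identifies $\Phi^\ast$ as an $H_0^1$ eigenfunction of $\cA^\ast$ with eigenvalue $\lambda^\ast$. Simplicity of $\lambda^\ast$ (as the principal Dirichlet eigenvalue of a divergence-form uniformly elliptic operator with positive coefficient, by Krein--Rutman/Perron--Frobenius) pins down the limit uniquely up to sign, and the sign is fixed by the positivity of $\Phi_\eps$. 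Since every subsequence of $\{\Phi_\eps\}$ has a further subsequence converging to the same limit, the full family $\Phi_\eps$ converges. The main technical step is ensuring that the $L^2$ convergence can be bootstrapped to the stronger mode (e.g.\ $C^4$) claimed implicitly by the uniform $C^4$ bound stated before the lemma; this follows from the uniform $C^{k,1}$ estimate on $a_\eps$ and Schauder/$W^{2,p}$ theory applied to the limit equation, so extracting a $C^4$-convergent subsequence and identifying the same unique limit finishes the argument.
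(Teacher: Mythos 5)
Your proof is correct and follows essentially the same route as the paper's: rewrite $\eps^{-1}\lambda_\eps$ as the Rayleigh quotient with coefficient $(\Sigma_s+\eps^2\Sigma_a)^{-1}$, use the uniform convergence of this coefficient to $1/\Sigma_s$ to get convergence of the eigenvalue, and extract a convergent subsequence of the normalized eigenfunctions to identify the limit equation. You actually spell out steps the paper leaves implicit, notably the two-sided Rayleigh-quotient comparison and the use of positivity and simplicity to conclude that the limit is the \emph{first} eigenfunction of $\cA^{\ast}$ and that the whole family (not just a subsequence) converges.
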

\begin{proof}
    We observe that $$\frac{\lambda_{\eps}}{\eps} = \inf_{u\in H^1_0(\Omega)} \frac{\int_{\Omega} (\Sigma_{s}(x) + \eps^2\Sigma_{a}(x))^{-1} |\nabla u|^2 dx }{\int_{\Omega} |u|^2 dx }$$
    must monotonically decrease as $\eps \to 0$, thus the limit $\lambda^{\ast} > 0$ exists. Taking any sequence of the eigenfunctions $\Phi_{\eps}$ as $\eps \to 0$, we have a converging subsequence to $\Phi^{\dagger}\in C^{2,\alpha}(\Omega)$, which leads to 
    \begin{equation*}
        \cA^{\ast} \Phi^{\dagger} = \lambda^{\ast} \Phi^{\dagger}.
    \end{equation*}
    Thus $\Phi^{\dagger}$ is the first eigenfunction of $\cA^{\ast}$.
\end{proof}

\begin{lemma}\label{LEM: INN}
If $\operatorname{dist}(x, \partial\Omega) \ge \frac{\kappa |\log\eps|}{\inf_{\Omega}\Sigma_{t,\eps}}$, then  
    \begin{equation*}
        \cP_{\eps} \Phi_{\eps} (x)= \Phi_{\eps}(x) - \frac{1}{d\Sigma_{t,\eps}(x)} \lambda_{\eps} \Phi_{\eps}(x) + \cO(\eps^{\min(\kappa, 4)}).
    \end{equation*}
\end{lemma}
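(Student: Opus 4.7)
The plan is to expand $\cP_\eps\Phi_\eps(x)$ along rays by Taylor expansion and then recognize the leading correction via the eigenvalue equation for $\cA_\eps$. Writing $\sigma:=\Sigma_{t,\eps}$ and $\sigma_0:=\sigma(x)$, the identity $\tfrac{d}{ds}E_\eps(x,x-sv)=-E_\eps(x,x-sv)\sigma(x-sv)$ gives $\int_0^\infty E_\eps\,\sigma(x-sv)\,ds=1$. Under the hypothesis on $\operatorname{dist}(x,\partial\Omega)$, $E_\eps(x,x-\tau_-(x,v)v)\le \eps^\kappa$ uniformly in $v$, so one may extend the $s$-integral in $\cP_\eps\Phi_\eps(x)$ up to $+\infty$ at the cost of an $O(\eps^\kappa)$ error.

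I then substitute $w=-\log E_\eps(x,x-sv)$, which turns the ray integral into $\int_0^\infty\Phi_\eps(x-s(w)v)\,e^{-w}\,dw$, where $s(w)$ is the inverse of $s\mapsto\int_0^s\sigma(x-tv)\,dt$. Power-series inversion yields
\[
s(w)=\frac{w}{\sigma_0}+\frac{v\cdot\nabla\sigma}{2\sigma_0^3}\,w^2+O(w^3/\sigma_0^3).
\]
I Taylor-expand $\Phi_\eps(x-s(w)v)$ up to cubic order in $s(w)$, using the uniform $C^4$ bound on $\Phi_\eps$ to control the remainder by $O(s(w)^4)$. Evaluating the $w$-integrals via $\int_0^\infty w^n e^{-w}\,dw=n!$ produces a finite list of terms whose coefficients have orders $\sigma_0^{-k}$ for $k=0,1,2,3$, plus an $O(\sigma_0^{-4})$ remainder.

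Angular integration via $\int_{\bbS^{d-1}}v_iv_j\,dv=\delta_{ij}/d$ annihilates every term of odd $v$-degree---in particular the $O(\sigma_0^{-1})$ term $v\cdot\nabla\Phi_\eps$ and the $O(\sigma_0^{-3})$ term $(v\cdot\nabla)^3\Phi_\eps$---leaving exactly
\[
\frac{\Delta\Phi_\eps(x)}{d\sigma_0^2}-\frac{\nabla\sigma(x)\cdot\nabla\Phi_\eps(x)}{d\sigma_0^3}.
\]
Expanding $-\nabla\cdot(\sigma^{-1}\nabla\Phi_\eps)=\lambda_\eps\Phi_\eps$ gives the identity $\sigma^{-2}\Delta\Phi_\eps-\sigma^{-3}\nabla\sigma\cdot\nabla\Phi_\eps=-\lambda_\eps\sigma^{-1}\Phi_\eps$, so the surviving correction collapses to $-\lambda_\eps\Phi_\eps(x)/(d\sigma_0)$. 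Combined with the $O(\eps^\kappa)$ boundary error from the first step, this gives the claim.

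The main obstacle is the bookkeeping. One must verify that the higher-order contributions---from the cubic coefficient of $s(w)$, from cross terms in $s(w)^2$ and $s(w)^3$ within the Taylor expansion of $\Phi_\eps$, from $s(w)^4$, and from the $C^4$ Taylor remainder---either drop out by the parity argument above or yield contributions of size $O(\sigma_0^{-4})=O(\eps^4)$ once the scalings $\sigma_0\sim\eps^{-1}$ and $\nabla^\alpha\sigma\sim\eps^{-1}$ (provided by assumption $(\cE)$) are accounted for. The uniform $C^4$ bound on $\Phi_\eps$ supplied by the preliminaries and the $C^{k,1}$ regularity of $\sigma$ are exactly what make the remainder uniform in $\eps$.
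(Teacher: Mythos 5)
Your proposal is correct and follows the same overall strategy as the paper: discard the contribution from outside a ball of radius $r=\kappa|\log\eps|/\inf_\Omega\Sigma_{t,\eps}$ at cost $\cO(\eps^\kappa)$, Taylor-expand to fourth order inside, kill the odd-degree terms by angular parity, and identify the surviving correction $\frac{1}{d}\bigl(\Sigma_{t,\eps}^{-2}\Delta\Phi_\eps-\Sigma_{t,\eps}^{-3}\nabla\Sigma_{t,\eps}\cdot\nabla\Phi_\eps\bigr)=-\lambda_\eps\Phi_\eps/(d\Sigma_{t,\eps})$ via the eigenvalue equation. Where you genuinely differ is in the organization of the expansion: the substitution $w=\int_0^s\Sigma_{t,\eps}(x-tv)\,dt$ absorbs both the exponential attenuation and the factor $\Sigma_{t,\eps}(x-sv)$ of the Peierls kernel into the unit measure $e^{-w}dw$, so you only ever expand $\Phi_\eps$ (plus the inverse map $s(w)$), whereas the paper expands $E_\eps$ and $U_\eps=\Sigma_{t,\eps}\Phi_\eps$ separately and must track the cancellation of the $|\nabla\Sigma_{t,\eps}|^2$ and $\Delta\Sigma_{t,\eps}$ terms among $\cI_1,\cI_2,\cI_3$ before the answer collapses to $\Sigma_{t,\eps}^{-2}\Delta\Phi_\eps-\Sigma_{t,\eps}^{-3}\nabla\Sigma_{t,\eps}\cdot\nabla\Phi_\eps$; I checked that both routes land on exactly this expression, and your version buys a shorter computation with no cancellations to verify. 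Two small points of care: (i) you should not literally extend the $s$-integral to $+\infty$, since $\Phi_\eps$ and $\Sigma_{t,\eps}$ live only on $\Omega$ --- rather truncate at $s\le r$ (which stays in $\Omega$ by hypothesis) and complete the moment integrals $\int_0^{\kappa|\log\eps|}w^ne^{-w}dw$ to $n!$ at cost $\cO(\eps^\kappa|\log\eps|^n)$, the same harmless logarithmic loss the paper also absorbs into $\cO(\eps^{\min(\kappa,4)})$; (ii) the order-$\eps^3$ terms (e.g.\ the $w^3$ coefficient of $s(w)$ against $v\cdot\nabla\Phi_\eps$, and $\langle s^3\rangle(v\cdot\nabla)^3\Phi_\eps$) survive only because they are odd in $v$, so your parity bookkeeping is doing real work there and is worth spelling out.
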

\begin{proof}
Let $r = \frac{\kappa |\log\eps|}{\inf_{\Omega}\Sigma_{t,\eps}}$ with $\kappa = \cO(1)$. Thus $r = \cO(\eps|\log\eps|)$. For any point $x$ such that $\operatorname{dist}(x, \partial\Omega) \ge r$, the ball $B_r(x) \subset \Omega$. Using Lemma~\ref{Lem: Pei}, we get
\begin{equation*}
\begin{aligned}
      \cP_{\eps} \Phi_{\eps} (x) &= \frac{1}{\nu_{d}}\int_{B_{r}(x)} \frac{E_{\eps}(x, y)}{|x - y|^{d-1}} \Sigma_{t,\eps}(y) \Phi_{\eps}(y) dy + \frac{1}{\nu_{d}}\int_{|y - x|>r} \frac{E_{\eps}(x, y)}{|x - y|^{d-1}} \Sigma_{t,\eps}(y) \Phi_{\eps}(y) dy.
\end{aligned}
\end{equation*}
The integral outside $B_{r}(x)$ is bounded by 
\begin{equation}\label{eq: out}
     \frac{\sup_{\Omega}\Sigma_{t,\eps}}{\inf_{\Omega}\Sigma_{t,\eps}} \|\Phi_{\eps}\|_{\infty} \exp(-r \inf_{\Omega}\Sigma_{t,\eps}) = \frac{\sup_{\Omega}\Sigma_{t,\eps}}{\inf_{\Omega}\Sigma_{t,\eps}} \|\Phi_{\eps}\|_{\infty} \eps^{\kappa} = \cO(\eps^{\kappa}).
\end{equation}
For the integral in $B_{r}(x)$, we expand both $E_{\eps}(x, y)$ and $U_{\eps} = \Sigma_{t,\eps}\Phi_{\eps}$ into Taylor series. Let $t := |x - y|$ and $v = \frac{x - y}{|x - y|}$, 
\begin{equation*}
\begin{aligned}
    E_{\eps}(x, y) &= e^{-t \Sigma_{t,\eps}(x)}\exp\left(- \frac{t^2}{2} \nabla\Sigma_{t,\eps}(x) \colon v^{\otimes 1}  - \frac{t^3}{6} \nabla^2\Sigma_{t,\eps}(x) \colon v^{\otimes 2}   + \cO(\|\Sigma_{t,\eps}\|_{C^4})t^4\right)\\ 
    &= e^{-t \Sigma_{t,\eps}(x)} \left[1  - \frac{t^2}{2} \nabla\Sigma_{t,\eps}(x) \colon v^{\otimes 1}  - \frac{t^3}{6} \nabla^2\Sigma_{t,\eps}(x) \colon v^{\otimes 2}   + \frac{t^4}{8} (\nabla \Sigma_{t,\eps}(x)\colon v^{\otimes 1})^2 \right. \\
    &\qquad \left. + \cO((\|\Sigma_{t,\eps}\|_{C^4} t^4 + \|\Sigma_{t,\eps}\|_{C^4}^2 t^5 ) \right. \Big],\\
    U_{\eps}(y) &= U_{\eps}(x) + t \nabla U_{\eps}(x)\colon v^{\otimes 1} + \frac{t^2}{2} \nabla^2 U_{\eps}(x) \colon v^{\otimes 2} + \frac{t^3}{6} \nabla^3 U_{\eps}(x) \colon v^{\otimes 3} + \cO(\|U_{\eps}\|_{C^4}) t^4.
\end{aligned}
\end{equation*}
Therefore, 
\begin{equation*}
\frac{1}{\nu_{d}}\int_{B_{r}(x)} \frac{E_{\eps}(x, y)}{|x - y|^{d-1}} U_{\eps}(y) dy  = \cI_1 + \cI_2 + \cI_3 + \cO(\eps^4),
\end{equation*}
where $\cI_1$ represents the diffusion approximation part in homogeneous media,
\begin{equation*}
\begin{aligned}
    \cI_1 &= \int_{\bbS^{d-1}} \int_0^r e^{-r \Sigma_{t,\eps}(x)}\left(U_{\eps}(x) + \frac{t^2}{2} \nabla^2 U(x)\colon v^{\otimes 2}\right) dt dv \\
    &= \frac{1 - e^{-r \Sigma_{t,\eps}(x)}}{\Sigma_{t,\eps}(x)} U_{\eps}(x) +  \Delta U_{\eps} \frac{1 - (1 + r \Sigma_{t,\eps} + \frac{1}{2}r^2 \Sigma_{t,\eps}^2 )e^{-r \Sigma_{t,\eps}(x)}}{d \Sigma_{t,\eps}^3}.
\end{aligned}
\end{equation*}
The second term $\cI_2$ represents the contribution from 1st order heterogeneity, 
\begin{equation*}
\begin{aligned}
    \cI_2 &=  \int_{\bbS^{d-1}} \int_0^r e^{-r \Sigma_{t,\eps}(x)} \left(-\frac{t^3}{2} (\nabla\Sigma_{t,\eps}(x) \cdot v) (\nabla U_{\eps}(x)\cdot v) + \frac{t^4}{8} U_{\eps}(x) (\nabla \Sigma_{t,\eps}(x)\cdot v)^2 \right) \\
    &= (-3 \nabla\Sigma_{t,\eps}(x)\cdot \nabla U_{\eps}(x)) \frac{1 - (1 + r\Sigma_{t,\eps} + \frac{1}{2}r^2 \Sigma_{t,\eps}^2 + \frac{1}{6} r^3 \Sigma_{t,\eps}^3) e^{-r \Sigma_{t,\eps}(x)} }{d \Sigma_{t,\eps}(x)^4} \\
    &\qquad + 3 |\nabla\Sigma_{t,\eps}(x)|^2 U_{\eps}(x) \frac{1 - (1 + r\Sigma_{t,\eps} + \frac{1}{2}r^2 \Sigma_{t,\eps}^2 + \frac{1}{6} r^3 \Sigma_{t,\eps}^3 + \frac{1}{24}r^4 \Sigma_{t,\eps}^4) e^{-r \Sigma_{t,\eps}(x)} }{d\Sigma_{t,\eps}^5(x)}. 
\end{aligned}
\end{equation*}
The third term $\cI_3$ comes from 2nd order heterogeneity,
\begin{equation*}
\begin{aligned}
    \cI_3 &= \int_{\bbS^{d-1}} \int_0^r  e^{-r \Sigma_{t,\eps}(x)} \frac{t^3}{6} U_{\eps}(x) \nabla^2 \Sigma_{t,\eps}(x) \colon v^{\otimes 2} dt dv \\
    &= -U_{\eps}(x) \Delta \Sigma_{t,\eps}(x) \frac{1 - (1 + r\Sigma_{t,\eps} + \frac{1}{2}r^2 \Sigma_{t,\eps}^2 + \frac{1}{6} r^3 \Sigma_{t,\eps}^3) e^{-r \Sigma_{t,\eps}(x)} }{d \Sigma_{t,\eps}^4(x)}. 
\end{aligned}
\end{equation*}
The terms in the form of 
\begin{equation*}
   \frac{  (1 + r\Sigma_{t,\eps} + \frac{1}{2}r^2 \Sigma_{t,\eps}^2 + \cdots + \frac{1}{(s-1)!} r^{s-1} \Sigma_{t,\eps}^{s-1})  e^{-r \Sigma_{t,\eps}(x)}}{\Sigma_{t,\eps}(x)^{s}}
\end{equation*}
are bounded by $\cO(|\log\eps|^{s-1} \eps^{s+\kappa})$. Then use $U_{\eps} = \Sigma_{t,\eps}\Phi_{\eps}$, such terms are $\cO(\eps^{\kappa})$ in $\cI_1$, $\cO(|\log\eps|^4\eps^{2+\kappa})$ in $\cI_2$, and $\cO(|\log\eps|^3 \eps^{2+\kappa})$ in $\cI_3$.  
After dropping these terms and simplification, we obtain
\begin{equation*}
\begin{aligned}
\frac{1}{\nu_{d}}\int_{B_{r}(x)} \frac{E_{\eps}(x, y)}{|x - y|^{d-1}} U_{\eps}(y) dy &= \Phi_{\eps}(x) + \frac{1}{d\Sigma_{t,\eps}} \cA_{\eps} \Phi_{\eps} + \cO( \eps^4 + \eps^{\kappa}) \\
&= \Phi_{\eps}(x) - \frac{1}{d\Sigma_{t,\eps}} \lambda_{\eps} \Phi_{\eps} + \cO(  \eps^4 + \eps^{\kappa}). 
\end{aligned}
\end{equation*}
Combining this with~\eqref{eq: out}, we obtain the desired estimate.
\end{proof}

\begin{lemma}\label{LEM: OUT}
Let $\ell = \operatorname{dist}(x, \partial\Omega)$. When $\ell < \frac{8|\log\eps|}{\inf_{\Omega}\Sigma_{t,\eps}}$, then $\Phi_{\eps}(x) = \Theta(\ell) $. 
\end{lemma}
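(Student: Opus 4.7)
The plan is to prove matching bounds $c\ell\le \Phi_{\eps}(x)\le C\ell$, each linear in $\ell$ with constants independent of $\eps$ throughout the stated range. The upper bound is immediate: the uniform estimate $\|\Phi_{\eps}\|_{C^4(\Omega)}\le C$ stated just before Lemma~\ref{LEM: Limit}, together with $\Phi_{\eps}|_{\partial\Omega}=0$, gives via the mean value theorem
\begin{equation*}
\Phi_{\eps}(x)\le \|\nabla\Phi_{\eps}\|_{\infty}\cdot \ell\lesssim \ell.
\end{equation*}

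The lower bound is the substantive half, and the strategy is to reduce it, via the diffusion limit, to a single application of the classical Hopf lemma. First I would use the uniform $C^4$ bound with Arzel\`a--Ascoli to extract a subsequence $\Phi_{\eps_n}\to\Phi^{\ast}$ in $C^3(\overline{\Omega})$. By Lemma~\ref{LEM: Limit} any such subsequential limit is the positive normalized first Dirichlet eigenfunction of $\cA^{\ast}$, which is unique; hence the entire family $\{\Phi_{\eps}\}$ converges in $C^3(\overline{\Omega})$, and in particular $\partial_n\Phi_{\eps}\to\partial_n\Phi^{\ast}$ uniformly on $\partial\Omega$. Then I would apply the classical Hopf lemma to $\Phi^{\ast}$: the operator $\cA^{\ast}$ is uniformly elliptic with $C^{k,1}$ coefficients, and $\partial\Omega\in C^{k+2}$ satisfies the interior-sphere condition, so there exists $c_0>0$ with $-\partial_n\Phi^{\ast}\ge c_0$ on $\partial\Omega$. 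Uniform convergence then propagates this to $-\partial_n\Phi_{\eps}\ge c_0/2$ on $\partial\Omega$ for all sufficiently small $\eps$.

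To conclude, for $x$ with $\ell=\operatorname{dist}(x,\partial\Omega)<\tfrac{8|\log\eps|}{\inf_{\Omega}\Sigma_{t,\eps}}=O(\eps|\log\eps|)$, pick the nearest boundary point $x_0$ (unique for $\eps$ small, by $C^{k+2}$-smoothness of $\partial\Omega$) and Taylor expand $\Phi_{\eps}$ along the inward normal at $x_0$ to second order,
\begin{equation*}
\Phi_{\eps}(x)=\ell\bigl(-\partial_n\Phi_{\eps}(x_0)\bigr)+O\bigl(\ell^2\,\|\Phi_{\eps}\|_{C^2(\Omega)}\bigr)\ge \tfrac{c_0}{2}\ell - C'\ell^2\ge \tfrac{c_0}{4}\ell,
\end{equation*}
where the last step uses that $\ell\to 0$ as $\eps\to 0^{+}$ so the quadratic correction is absorbed.

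The main obstacle is precisely the uniform (in $\eps$) Hopf-type lower bound on $-\partial_n\Phi_{\eps}$. The plan above reduces it to the classical statement for $\cA^{\ast}$, and the reduction hinges on (i) uniqueness of the positive normalized first eigenfunction of $\cA^{\ast}$, so that subsequential convergence upgrades to full convergence of the family, and (ii) the uniform $C^4$ bound being strong enough to transfer first and second derivatives up to the boundary. A fully quantitative alternative would construct an explicit $\eps$-independent subsolution barrier for $\cA_{\eps}-\lambda_{\eps}$ in a tubular neighborhood of $\partial\Omega$ and conclude by the maximum principle; this bypasses compactness but requires careful matching with the boundary condition of $\Phi_{\eps}$ and with the $O(\eps)$ size of $\lambda_{\eps}$ supplied by Lemma~\ref{LEM: Limit}.
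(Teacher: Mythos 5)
Your proposal is correct and follows essentially the same route as the paper: apply the Hopf lemma to the limiting eigenfunction $\Phi^{\dagger}$ of $\cA^{\ast}$, use compactness of $\partial\Omega$ to get a uniform negative bound on the normal derivative, transfer it to $\Phi_{\eps}$ via the convergence supplied by Lemma~\ref{LEM: Limit}, and conclude by expanding near the boundary. You actually supply more detail than the paper does (the upgrade from subsequential to full convergence via uniqueness of the positive normalized first eigenfunction, the explicit Taylor-expansion step with the $O(\ell^2)$ remainder, and the matching upper bound), all of which is sound.
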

\begin{proof}
    By Hopf Lemma, $\bn\cdot \nabla \Phi^{\dagger} < 0$ on boundary. Since $\partial\Omega\in C^{k+2}$ is a compact set and $\nabla \Phi^{\dagger}$ is continuous, there is an absolute constant $c > 0$ that $\bn\cdot \nabla \Phi^{\dagger} < -c$.  Then use Lemma~\ref{LEM: Limit}, we have $\Phi_{\eps}\to \Phi^{\dagger}$ in $C^{2,\alpha}(\Omega)$. Therefore, $\Phi_{\eps}(x) = \Theta(\ell)$ for sufficiently small $\ell$.
\end{proof}

\begin{lemma}\label{Lem: boundary}
Let $\ell = \operatorname{dist}(x, \partial\Omega)$. When $\ell < \frac{4|\log\eps|}{\inf_{\Omega}\Sigma_{t,\eps}}$, then 
\begin{equation*}
    \cP_{\eps} \Phi_{\eps}(x) = \cO(\eps|\log\eps|).
\end{equation*}
\end{lemma}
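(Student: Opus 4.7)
The plan is to split the Peierls integral
\[
\cP_{\eps}\Phi_{\eps}(x) = \frac{1}{\nu_d}\int_{\Omega} \frac{E_{\eps}(x,y)}{|x-y|^{d-1}} \Sigma_{t,\eps}(y)\,\Phi_{\eps}(y)\,dy
\]
into a near-field piece over $B_r(x)\cap \Omega$ and a far-field piece over $\Omega\setminus B_r(x)$, with the radius $r := \frac{4|\log\eps|}{\inf_{\Omega}\Sigma_{t,\eps}} = \cO(\eps|\log\eps|)$ chosen so that (a) for $y$ in the near field, $\operatorname{dist}(y,\partial\Omega)\le \ell + r < \frac{8|\log\eps|}{\inf_{\Omega}\Sigma_{t,\eps}}$ (putting us in the regime of Lemma~\ref{LEM: OUT}), and (b) the exponential weight $E_{\eps}$ decays like $\eps^{4}$ outside $B_{r}(x)$.

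For the near-field piece, Lemma~\ref{LEM: OUT} gives $\Phi_{\eps}(y) \le C(\ell+r)= \cO(\eps|\log\eps|)$ uniformly for $y\in B_r(x)\cap\Omega$. Pulling this bound out, switching to polar coordinates centered at $x$, and using $\Sigma_{t,\eps}\le \sup_{\Omega}\Sigma_{t,\eps}=\cO(\eps^{-1})$, the remaining factor is bounded by
\[
\int_{\bbS^{d-1}}\!\!\int_0^{r} E_{\eps}(x,x-sv)\,\Sigma_{t,\eps}(x-sv)\,ds\,dv \;\le\; \nu_d\cdot\frac{\sup_{\Omega}\Sigma_{t,\eps}}{\inf_{\Omega}\Sigma_{t,\eps}}\bigl(1-e^{-r\inf\Sigma_{t,\eps}}\bigr),
\]
which is $\cO(1)$. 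Hence the near-field contribution is $\cO(\eps|\log\eps|)$.

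For the far-field piece, the standard estimate $E_{\eps}(x,y)\le e^{-|x-y|\inf_{\Omega}\Sigma_{t,\eps}} \le \eps^{4}$ for $|x-y|\ge r$, together with the uniform bounds $\Phi_{\eps}\le \|\Phi_{\eps}\|_{\infty}\le C$ and $\Sigma_{t,\eps}\le C\eps^{-1}$, gives
\[
\frac{1}{\nu_d}\int_{\Omega\setminus B_r(x)} \frac{E_{\eps}(x,y)}{|x-y|^{d-1}} \Sigma_{t,\eps}(y)\,\Phi_{\eps}(y)\,dy \;\le\; C\eps^{3}\int_{\Omega}\frac{dy}{|x-y|^{d-1}}\;=\;\cO(\eps^{3}),
\]
since the singular integral on the right is uniformly bounded for $x\in\overline\Omega$ when $d\ge 2$. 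Combining the two pieces yields the claimed $\cO(\eps|\log\eps|)$ bound.

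The only real subtlety is calibrating $r$: it must be small enough (namely $r + \ell < 8|\log\eps|/\inf\Sigma_{t,\eps}$) to stay inside the boundary-layer regime where $\Phi_{\eps}(y)=\Theta(\operatorname{dist}(y,\partial\Omega))$ is controlled linearly, yet large enough for $E_{\eps}$ to decay by a polynomial factor in $\eps$ outside $B_r(x)$; the choice $r = 4|\log\eps|/\inf_{\Omega}\Sigma_{t,\eps}$ balances these two demands. Apart from this calibration, the estimate is a direct splitting argument using only Lemma~\ref{LEM: OUT} and the explicit polar representation of $\cP_{\eps}$.
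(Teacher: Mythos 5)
Your proof is correct and follows essentially the same route as the paper: split the Peierls integral at radius $r = 4|\log\eps|/\inf_{\Omega}\Sigma_{t,\eps}$, kill the far field by the exponential decay of $E_{\eps}$, and bound the near field by combining $\Phi_{\eps}(y) = \cO(\ell+r) = \cO(\eps|\log\eps|)$ from Lemma~\ref{LEM: OUT} with the $\cO(1)$ bound on the remaining kernel integral. Your version is in fact slightly more explicit than the paper's (which compresses the near-field kernel estimate into the factor $1-\exp(-r\inf_{\Omega}\Sigma_{t,\eps})$), but the substance is identical.
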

\begin{proof}
    Let $T = B_{r}(x) \cap \Omega$, where $r = \frac{4|\log\eps|}{\inf_{\Omega}\Sigma_{t,\eps}}$. Outside this region, the integral of $\cP_{\eps}\Phi_{\eps}$ is bounded by $\cO(\eps^4)$ according to~\eqref{eq: out}. By Lemma~\ref{LEM: OUT} we have
    \begin{equation*}
        \operatorname{dist}(x, \partial\Omega) \le \ell + r = \cO(\eps|\log\eps|),\quad x\in T.
    \end{equation*}
    Therefore, 
    \begin{equation*}
        \cP_{\eps}\Phi_{\eps}(x) = \cO(\eps^4) + ( 1 - \exp(- r \inf_{\Omega}\Sigma_{t,\eps}) ) \cO(\eps|\log\eps|) = \cO(\eps|\log\eps|).
    \end{equation*}
    The result is proven.
\end{proof}
The following theorem characterizes the principal eigenvalue for $\cP_{\eps}$. For slab geometry with homogeneous coefficients, all of the eigenvalues of $\cP_{\eps}$ can be sharply estimated through careful calculations with the min-max principle, see~\cite{lobarev1989eigenvalues}. 

\begin{theorem}\label{Thm: eigen}
The principal eigenvalue of $\cP_{\eps}$ is the spectral radius $\rho(\cP_{\eps})$ and satisfies  
$$c_1 \eps^2 < 1 - \rho(\cP_{\eps}) < c_2 \eps^2$$ for certain constants $c_1, c_2 > 0$.
\end{theorem}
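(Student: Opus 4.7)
The plan is to combine the Krein--Rutman theorem with a two-sided Rayleigh-type comparison against the Dirichlet eigenfunction $\Phi_{\eps}$. First, since $\cP_{\eps}$ is compact, self-adjoint, and has a strictly positive kernel on $\Omega \times \Omega$ (the integration path $xy$ lies in $\Omega$ by convexity), the Krein--Rutman theorem identifies $\rho(\cP_{\eps})$ as a simple eigenvalue with a strictly positive principal eigenfunction $\phi_{\eps} \in L^2_+(\Omega)$; this settles the first half of the statement. The quantitative bounds then rest on two tools: the Rayleigh characterization
\begin{equation*}
1 - \rho(\cP_{\eps}) = \inf_{f \in L^2(\Omega),\, f \neq 0} \frac{\langle (I - \cP_{\eps}) f, f \rangle}{\|f\|_{L^2(\Omega)}^2},
\end{equation*}
and the self-adjoint pairing identity
\begin{equation*}
(1 - \rho(\cP_{\eps})) \langle \Phi_{\eps}, \phi_{\eps} \rangle = \langle (I - \cP_{\eps}) \Phi_{\eps}, \phi_{\eps} \rangle.
\end{equation*}

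For the upper bound $1 - \rho(\cP_{\eps}) < c_2 \eps^2$, I would take $f = \Phi_{\eps}$ in the Rayleigh quotient and split $\Omega$ into the interior region $\Omega_{\mathrm{int}} = \{x : \operatorname{dist}(x, \partial \Omega) \ge 4 |\log \eps| / \inf_{\Omega} \Sigma_{t,\eps}\}$ and the boundary strip $B_{\eps} = \Omega \setminus \Omega_{\mathrm{int}}$. On $\Omega_{\mathrm{int}}$, Lemma~\ref{LEM: INN} with $\kappa = 4$ yields $(I - \cP_{\eps}) \Phi_{\eps} = \frac{\lambda_{\eps}}{d\,\Sigma_{t,\eps}} \Phi_{\eps} + \cO(\eps^4)$; since $\lambda_{\eps} = \Theta(\eps)$ by Lemma~\ref{LEM: Limit} and $\Sigma_{t,\eps} = \Theta(\eps^{-1})$, the coefficient $\lambda_{\eps}/(d\Sigma_{t,\eps})$ is uniformly $\Theta(\eps^2)$, producing an interior contribution of order $\Theta(\eps^2) \|\Phi_{\eps}\|_{L^2}^2$. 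On $B_{\eps}$, Lemmas~\ref{LEM: OUT} and~\ref{Lem: boundary} give $|\Phi_{\eps}|, |\cP_{\eps} \Phi_{\eps}| = \cO(\eps |\log \eps|)$ pointwise, while $|B_{\eps}| = \cO(\eps |\log \eps|)$, so the boundary contribution is $\cO(\eps^3 |\log \eps|^3) = o(\eps^2)$ and is absorbed.

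For the lower bound $1 - \rho(\cP_{\eps}) > c_1 \eps^2$, I would invoke the pairing identity. On $\Omega_{\mathrm{int}}$, the same expansion yields the pointwise inequality $(I - \cP_{\eps}) \Phi_{\eps}(x) \ge c \eps^2 \Phi_{\eps}(x)$ for some $c > 0$, giving the principal contribution $c \eps^2 \int_{\Omega_{\mathrm{int}}} \Phi_{\eps} \phi_{\eps}\,dx$. The boundary-strip term $\int_{B_{\eps}} \phi_{\eps} (\Phi_{\eps} - \cP_{\eps} \Phi_{\eps})\,dx$ is controlled by $C \eps |\log \eps|\,\|\phi_{\eps}\|_{L^1(B_{\eps})}$. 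Once this is shown to be $o(\eps^2) \langle \Phi_{\eps}, \phi_{\eps} \rangle$, dividing by $\langle \Phi_{\eps}, \phi_{\eps} \rangle > 0$ (positive by Krein--Rutman) completes the bound.

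The main obstacle is this last step: obtaining quantitative control on the a priori unknown $\phi_{\eps}$ inside $B_{\eps}$. My plan is to show, by a compactness argument mirroring Lemma~\ref{LEM: Limit}, that every subsequential limit of $\phi_{\eps}/\|\phi_{\eps}\|_{L^2}$ as $\eps \to 0$ is a non-negative Dirichlet eigenfunction of $\cA^{\ast}$ from~\eqref{EQ: A STAR}; by simplicity of the principal eigenvalue of $\cA^{\ast}$ together with Hopf's lemma, this limit is a positive multiple of $\Phi^{\ast}$ and vanishes linearly at $\partial \Omega$. Combined with uniform higher regularity, this forces $\max_{B_{\eps}} \phi_{\eps} = \cO(\eps |\log \eps|)$ and hence $\|\phi_{\eps}\|_{L^1(B_{\eps})} = \cO(\eps^2 |\log \eps|^2)$, making the boundary correction $\cO(\eps^3 |\log \eps|^3)$, strictly lower order than the interior $\Theta(\eps^2)$ contribution. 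Carrying out this compactness argument—presumably via the eigenvalue relation $\cP_{\eps} \phi_{\eps} = \rho(\cP_{\eps}) \phi_{\eps}$ combined with the interior/boundary expansions underlying Lemmas~\ref{LEM: INN} and~\ref{Lem: boundary}—is the most delicate part of the proof.
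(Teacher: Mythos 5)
Your first half --- Krein--Rutman for simplicity and positivity of the principal eigenpair, and the bound $1-\rho(\cP_{\eps})<c_2\eps^2$ via the Rayleigh quotient with test function $\Phi_{\eps}$ and the interior/boundary-strip splitting using Lemmas~\ref{LEM: INN}, \ref{LEM: OUT} and \ref{Lem: boundary} --- is essentially the paper's argument. One technical point: the kernel $E_{\eps}(x,y)|x-y|^{1-d}\Sigma_{t,\eps}(y)$ is symmetric only with respect to the $\Sigma_{t,\eps}$-weighted inner product, so the min--max principle must be applied to the conjugated operator $\Sigma_{t,\eps}^{1/2}\cP_{\eps}\Sigma_{t,\eps}^{-1/2}$ (as the paper does); since $\Sigma_{t,\eps}=\Theta(\eps^{-1})$ uniformly this only affects constants, but your unweighted Rayleigh quotient identity is not literally correct as written.

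The genuine gap is in the other bound, $\rho(\cP_{\eps})<1-c_1\eps^2$. Your pairing identity forces you to control the \emph{unknown} principal eigenfunction $\phi_{\eps}$ in the boundary strip, and the compactness argument you sketch does not deliver what you need. First, to identify subsequential limits of $\phi_{\eps}$ as eigenfunctions of $\cA^{\ast}$ you would have to run the Taylor expansion underlying Lemma~\ref{LEM: INN} with $\phi_{\eps}$ in place of $\Phi_{\eps}$, which requires uniform-in-$\eps$ $C^4$ bounds on $\phi_{\eps}$; these are not available a priori and establishing them is itself a boundary-layer problem. Second, even granting convergence to a limit that vanishes linearly at $\partial\Omega$, a qualitative limit gives $\phi_{\eps}=\cO(\ell)+o(1)$ with an unquantified $o(1)$, not the rate $\max_{B_{\eps}}\phi_{\eps}=\cO(\eps|\log\eps|)$ your estimate needs; and unlike $\Phi_{\eps}$, the eigenfunction of the integral operator does not vanish on $\partial\Omega$, so the linear-vanishing conclusion does not transfer pointwise. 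The paper sidesteps all of this: it extends $\Sigma_a,\Sigma_s$ by Whitney's theorem to a convex $\Omega^{\diamond}\supset\Omega$ with $\operatorname{dist}(\Omega,\partial\Omega^{\diamond})=\Theta(1)$, so that all of $\Omega$ lies in the \emph{interior} region of $\Omega^{\diamond}$ where Lemma~\ref{LEM: INN} applies and the extended eigenfunction $\Phi_{\eps}^{\diamond}$ is bounded away from zero. This produces an explicit positive strict supersolution, $\cP_{\eps}(\Phi_{\eps}^{\diamond}\chi_{\Omega})<(1-c''\eps^{2})\Phi_{\eps}^{\diamond}$ on $\Omega$, and the bound $\rho(\cP_{\eps})<1-c''\eps^{2}$ follows by iterating and applying Gelfand's formula, with no information about $\phi_{\eps}$ required. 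I would recommend replacing your pairing argument with this extension/supersolution device, or else committing to a full quantitative boundary-layer analysis of $\phi_{\eps}$, which is a substantially harder task than the rest of the proof.
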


\begin{proof}
Since $\cP_{\eps}$ is a positive operator, the classical Krein-Rutman theorem~\cite{krein1948linear} implies that the spectral radius $\lambda:= \rho(\cP_{\eps})$ is the principal eigenvalue of multiplicity one, and the associated eigenfunction is strictly positive. To apply the Courant-Fischer-Weyl min-max principle~\cite{reed1978iv}, we have to symmetrize the operator. Define $\cP_{\eps, \operatorname{sym}}:= \Sigma_{t,\eps}^{1/2} \cP_{\eps} \Sigma_{t,\eps}^{-1/2}$, then the eigenvalues of $\cP_{\eps}$ and $\cP_{\eps, \mathrm{sym}}$ are the same. Now, we can apply the min-max principle, the principal eigenvalue of $\cP_{\eps}$ is bounded below by:
\begin{equation*}
\begin{aligned}
    \rho(\cP_{\eps}) &= \max_{f\in L^2(\Omega)} \frac{\aver{\cP_{\eps, \mathrm{sym}} f, f}}{\aver{f, f}} \ge \frac{\aver{\cP_{\eps, \mathrm{sym}} \Sigma_{t,\eps}^{1/2}\Phi_{\eps}, \Sigma_{t,\eps}^{1/2}\Phi_{\eps}}}{\aver{\Sigma_{t,\eps}^{1/2}\Phi_{\eps}, \Sigma_{t,\eps}^{1/2}\Phi_{\eps}}}\\
    &= \left.\dint_{\Omega} \Sigma_{t,\eps}(x) \Phi_{\eps}(x) \cdot \cP_{\eps} \Phi_{\eps}(x) dx \middle/ \dint_{\Omega} \Sigma_{t,\eps} (x) \Phi_{\eps}^2(x) dx\right. .
\end{aligned} 
\end{equation*}
To estimate the numerator, we decompose the domain into $\Omega = \Omega_r + \Omega_r^{\complement}$, where $\Omega_r$ refers to the interior region $\Omega_r := \{x\in\Omega\mid \operatorname{dist}(x, \partial\Omega) \ge r := \frac{4|\log\eps|}{\inf_{\Omega}\Sigma_{t,\eps}}\}$. By Lemma~\ref{LEM: INN}, we have 
\begin{equation*}
    \cP_{\eps}\Phi_{\eps}(x) = (1 - \frac{\lambda_{\eps}}{d\Sigma_{t,\eps}} )\Phi_{\eps}(x) + \cO(\eps^4),\quad x\in \Omega_r.
\end{equation*}
Using Lemma~\ref{LEM: OUT}, we conclude $\Phi_{\eps}(x)$ is bounded below by $\cO(r)$ on $\Omega_r$, thus $\Phi_{\eps}(1 - \frac{\lambda_{\eps}}{d\Sigma_{t,\eps}}) \ge C  \eps^3|\log\eps| $ for a certain $C > 0$, we can safely absorb $\cO(\eps^4)$ into it. Therefore, with the estimate in Lemma~\ref{Lem: boundary},
\begin{equation*}
\begin{aligned}
\dint_{\Omega} \Sigma_{t,\eps}(x) \Phi_{\eps}(x) \cdot \cP_{\eps} \Phi_{\eps}(x) dx &\ge (1 - c'\eps^2) \int_{\Omega_r} \Sigma_{t,\eps}(x)  |\Phi_{\eps}(x)|^2 dx \\&\quad + C\int_{\Omega_r^{\complement}} \Sigma_{t,\eps}(x) \operatorname{dist}(x,\partial\Omega)\cdot (\eps|\log\eps|) dx \\
&=(1 - c'\eps^2) \int_{\Omega} \Sigma_{t,\eps}(x)  |\Phi_{\eps}(x)|^2 dx + \cO(\eps^2 |\log\eps|^3) \\
&= (1 - c'\eps^2 + \cO(\eps^3|\log\eps|^3)) \int_{\Omega} \Sigma_{t,\eps}(x)  |\Phi_{\eps}(x)|^2 dx.
\end{aligned}
\end{equation*}
Hence, there exists $c_1$ that $\rho(\cP_{\eps}) > 1 - c_1 \eps^2$. 

Next, we prove the upper bound of $\rho(\cP_{\eps})$. By Whitney's extension theorem, we can extend the coefficient $\Sigma_{s}, \Sigma_a\in C^4(\bbR^d)$. Then, we take a convex neighborhood set $\Omega^{\diamond}\supset \Omega$ that 
\begin{equation*}
    \inf_{\Omega^{\diamond}} \Sigma_{a,\eps} > \frac{1}{2} \inf_{\Omega} \Sigma_{a,\eps}\quad     \inf_{\Omega^{\diamond}} \Sigma_{s,\eps} > \frac{1}{2} \inf_{\Omega} \Sigma_{s,\eps}.
\end{equation*}
Then $\operatorname{dist}(\Omega, \partial\Omega^{\diamond}) = \Theta(1)$ due to boundedness of derivatives.
Let $r^{\diamond} = \frac{4|\log\eps|}{\inf_{\Omega^{\diamond}\Sigma_{t,\eps}}} = o(1)$. Then $\Omega\subset \{x\in \Omega^{\diamond} \mid \operatorname{dist}(x, \partial\Omega^{\diamond}) \ge r^{\diamond} \}$ for sufficiently small $\eps$. Denote $\cP_{\eps}^{\diamond}$ (resp. $\cA_{\eps}^{\diamond}$) the extension of $\cP_{\eps}$ (resp. $\cA_{\eps}$). Let $( \lambda_{\eps}^{\diamond}, \Phi_{\eps}^{\diamond} )$ be the first eigen-pair of $\cA_{\eps}^{\diamond}$, then use Lemma~\ref{LEM: INN} on domain $\Omega^{\diamond}$, 
\begin{equation*}
\begin{aligned}
    \cP_{\eps}^{\diamond} \Phi_{\eps}^{\diamond} = \Phi_{\eps}^{\diamond} - \frac{1}{d\Sigma_{t,\eps}} \lambda_{\eps}^{\diamond} \Phi_{\eps}^{\diamond} + \cO(\eps^4), \quad x\in\Omega.
\end{aligned}
\end{equation*}
Therefore, we have 
\begin{equation*}
\begin{aligned}
    \cP_{\eps} ( \Phi_{\eps}^{\diamond}\chi_{\Omega} ) &< \cP_{\eps}^{\diamond} \Phi_{\eps}^{\diamond} =  \Phi_{\eps}^{\diamond} - \frac{1}{d\Sigma_{t,\eps}} \lambda_{\eps}^{\diamond} \Phi_{\eps}^{\diamond} + \cO(\eps^4) \\ & <  \Phi_{\eps}^{\diamond} (1 - c''\eps^2 ),\quad x\in\Omega. 
\end{aligned}
\end{equation*}
It implies that $\rho(\cP_{\eps}) < 1 - c''\eps^2 $ by Gelfand's formula.
\end{proof}

The above theorem is still valid if the absorption coefficient $\Sigma_{a}$ has a rough perturbation in order $\cO(\eps^{\beta})$, $\beta > 0$.
\begin{corollary}\label{cor: eigen}
    Let $\Sigma_{a}^w\in L^{\infty}(\Omega)$ be such that  
    \begin{equation*}
        \|\Sigma_{a} - \Sigma_a^w\|_{L^{\infty}(\Omega)} < C \eps^{\beta},\quad \beta > 0.
    \end{equation*}
    Then the bound of Theorem~\ref{Thm: eigen} is still true when $\Sigma_{a}$ is replaced with $\Sigma_{a}^w$.
\end{corollary}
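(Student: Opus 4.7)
The plan is to prove Corollary~\ref{cor: eigen} by a spectral perturbation argument that reduces the rough coefficient case to the smooth case already handled in Theorem~\ref{Thm: eigen}. The key observation is that although $\Sigma_a^w$ may lack smoothness, the $L^\infty$ closeness to $\Sigma_a$ allows us to control the operator-norm distance between the corresponding (symmetrized) Peierls operators by $O(\eps^{2+\beta})$, which is strictly smaller than the $\Theta(\eps^2)$ spectral gap furnished by Theorem~\ref{Thm: eigen}.

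First I would introduce the symmetrized operators $\cP_{\eps,\mathrm{sym}} := \Sigma_{t,\eps}^{1/2}\cP_\eps\Sigma_{t,\eps}^{-1/2}$ and $\cP_{\eps,\mathrm{sym}}^w := (\Sigma_{t,\eps}^w)^{1/2}\cP_\eps^w(\Sigma_{t,\eps}^w)^{-1/2}$ with $\Sigma_{t,\eps}^w := \eps\Sigma_a^w + \eps^{-1}\Sigma_s$. Both are self-adjoint positive compact operators on $L^2(\Omega)$ with symmetric integral kernels
\[
K_{\mathrm{sym}}(x,y) = \frac{\Sigma_{t,\eps}^{1/2}(x)\Sigma_{t,\eps}^{1/2}(y)E_\eps(x,y)}{\nu_d |x-y|^{d-1}},
\]
and analogously for $K_{\mathrm{sym}}^w$, and they are isospectral with $\cP_\eps$ and $\cP_\eps^w$ respectively. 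Since for positive self-adjoint compact operators the spectral radius equals the $L^2$ operator norm, it suffices to show $\bigl\|\cP_{\eps,\mathrm{sym}}-\cP_{\eps,\mathrm{sym}}^w\bigr\|_{L^2\to L^2} = O(\eps^{2+\beta})$; combined with Theorem~\ref{Thm: eigen} and the reverse triangle inequality, this immediately yields $c_1'\eps^2 < 1 - \rho(\cP_\eps^w) < c_2'\eps^2$ for sufficiently small $\eps$, since $\beta>0$.

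The quantitative kernel estimate proceeds by splitting
\[
K_{\mathrm{sym}}-K_{\mathrm{sym}}^w = \frac{a(E_\eps - E_\eps^w) + (a - a^w)E_\eps^w}{\nu_d |x-y|^{d-1}},
\]
where $a := \Sigma_{t,\eps}^{1/2}(x)\Sigma_{t,\eps}^{1/2}(y)$ and $a^w$ is its counterpart. The hypothesis $\|\Sigma_a-\Sigma_a^w\|_\infty < C\eps^\beta$ gives $|\Sigma_{t,\eps}-\Sigma_{t,\eps}^w|\le C\eps^{1+\beta}$; the elementary bound $|e^{-\alpha}-e^{-\beta}|\le|\alpha-\beta|e^{-\min(\alpha,\beta)}$ applied to the exponents along the characteristic then yields $|E_\eps - E_\eps^w|\le C|x-y|\eps^{1+\beta}E_\eps$. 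Using $\sqrt\alpha-\sqrt\beta = (\alpha-\beta)/(\sqrt\alpha+\sqrt\beta)$ together with the lower bound $\Sigma_{t,\eps},\Sigma_{t,\eps}^w \gtrsim \eps^{-1}\inf\Sigma_s$ gives $|a-a^w|\le C\eps^{1+\beta}$. Applying Schur's test, polar coordinates convert each contribution into an integral of the form $\int_0^{\operatorname{diam}(\Omega)} t\,\Sigma_{t,\eps}e^{-t\Sigma_{t,\eps}}\,dt \sim \eps^2$ or $\int_0^{\operatorname{diam}(\Omega)} e^{-t\Sigma_{t,\eps}}\,dt \sim \eps$, producing the claimed $O(\eps^{2+\beta})$ bound on the operator norm.

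The main obstacle is the Schur estimate: one must balance the singularity $|x-y|^{-(d-1)}$ against the sharp exponential decay of $E_\eps$ on the effective scale $\eps|\log\eps|$, while extracting the two powers of $\eps$ that arise from the diffusive scaling $\Sigma_{t,\eps}\sim\eps^{-1}$. Once the operator-norm estimate is in hand, the perturbation step is routine: $|\rho(\cP_\eps)-\rho(\cP_\eps^w)| \le \|\cP_{\eps,\mathrm{sym}}-\cP_{\eps,\mathrm{sym}}^w\|_{L^2\to L^2} = O(\eps^{2+\beta}) = o(\eps^2)$, so the $\Theta(\eps^2)$ gap survives for the rough coefficient and the corollary follows.
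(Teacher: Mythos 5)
Your argument is correct, but it takes a genuinely different route from the paper's. The paper exploits the order structure of the problem: it sandwiches $\Sigma_a^w$ between the smooth constant shifts $\Sigma_a^{\downarrow}=\Sigma_a-C\eps^{\beta}$ and $\Sigma_a^{\uparrow}=\Sigma_a+C\eps^{\beta}$ (both still $C^{k,1}$), uses the pointwise monotonicity of the Peierls kernel in the absorption coefficient to dominate $\cP_\eps^w$ by the shifted operators up to the multiplicative factor $\sup\frac{\eps^2\Sigma_a^{\uparrow}+\Sigma_s}{\eps^2\Sigma_a^{\downarrow}+\Sigma_s}=1+\cO(\eps^{2+\beta})$, and then invokes Theorem~\ref{Thm: eigen} for the shifted smooth coefficients; no new kernel estimates are needed. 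You instead prove the additive bound $\|\cP_{\eps,\mathrm{sym}}-\cP^w_{\eps,\mathrm{sym}}\|_{L^2\to L^2}=\cO(\eps^{2+\beta})$ by direct kernel estimates and Schur's test, and transfer it to the spectral radius via self-adjointness. Both work. The paper's comparison is shorter; yours is more quantitative (it controls $|\rho(\cP_\eps)-\rho(\cP_\eps^w)|$ itself) and would survive perturbations that are not conveniently majorized by constant shifts. You are also right that the symmetrization is essential and not cosmetic: the spectral radius is not operator-norm Lipschitz for non-normal operators, so $\rho=\|\cdot\|$ must be secured before the perturbation step. Two cosmetic blemishes: the model integral you display, $\int_0^{\operatorname{diam}(\Omega)}t\,\Sigma_{t,\eps}e^{-t\Sigma_{t,\eps}}\,dt$, is $\Theta(\eps)$ rather than $\eps^2$ --- the $\eps^2$ comes from $\int_0^{\infty}te^{-t\Sigma_{t,\eps}}\,dt=\Sigma_{t,\eps}^{-2}$, and your prefactor bookkeeping still correctly lands on $\cO(\eps^{2+\beta})$; and the exponent in your elementary bound $|e^{-\alpha}-e^{-\beta}|\le|\alpha-\beta|e^{-\min(\alpha,\beta)}$ collides notationally with the $\beta$ of the hypothesis.
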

\begin{proof}
    We define two functions $\Sigma_{a}^{\uparrow} := \Sigma_{a} + C\eps^{\beta}$ and $\Sigma_{a}^{\downarrow} := \Sigma_{a} - C\eps^{\beta}$. Then $\Sigma_{a}^{\uparrow}, \Sigma_{a}^{\downarrow}\in C^{k, 1}(\overline{\Omega})$. Let $\cP_{\eps}^{\uparrow}$ and $\cP_{\eps}^{\downarrow}$ be the Peirels integral operators for $\Sigma_{a}^{\uparrow}$ and $\Sigma_{a}^{\downarrow}$, respectively.
    Then 
    \begin{equation*}
        \rho(\cP_{\eps}^{w}) < \rho\left(\cP_{\eps}^{\downarrow}\right) \sup\frac{\eps^2 \Sigma_{a}^{\uparrow} + \Sigma_s}{\eps^2 \Sigma_{a}^{\downarrow} + \Sigma_s} < (1 - \Theta(\eps^2) ) (1 + \cO(\eps^{2+\beta})) = 1 - \Theta(\eps^2).
    \end{equation*}
    Similarly, 
    \begin{equation*}
        \rho(\cP_{\eps}^{w}) > \rho\left(\cP_{\eps}^{\uparrow}\right) \sup\frac{\eps^2 \Sigma_{a}^{\downarrow} + \Sigma_s}{\eps^2 \Sigma_{a}^{\uparrow} + \Sigma_s} > (1 - \Theta(\eps^2)) (1 - \cO(\eps^{2+\beta})) = 1 - \Theta(\eps^2).
    \end{equation*}
    Thus, the conclusion of Theorem~\ref{Thm: eigen} does not change.
\end{proof}

\subsection{Main estimate}

Under the conditions $(\cD)$ and $(\cE)$, with a slight modification of the classical diffusion approximation theory~\cite[Chapter XXI, \S 5.2, Theorem 2]{dautray2012mathematical_v6} and standard fixed-point theory, we can show $|\widetilde{u}(x, v) - \widetilde{U}(x)| = \cO(\eps)$ if $\eps$ is sufficiently small, where $\widetilde{U}$ is the positive solution to the semilinear diffusion equation 
\begin{equation}\label{eq: nonlinear diff}
\begin{aligned}
    \nabla \left(\frac{1}{d\Sigma_{s}}\nabla \widetilde{U}\right) - \widetilde{\Sigma}_{a}(|\widetilde{U}|) \widetilde{U} &= 0,\quad &\text{ in }&\Omega, \\\widetilde{U}(x) &=f_0, \quad &\text{ on }&\partial\Omega.
\end{aligned}
\end{equation}
However, to make the diffusion approximation valid, we need the solution $\widetilde{U}\in C^{3,\alpha}(\overline{\Omega})$. 
\begin{theorem}\label{Thm: reg}
    Suppose $\widetilde{\Sigma}_a$ and $\Sigma_s$ satisfy condition $(\cE)$, then ~\eqref{eq: nonlinear diff} admits a unique solution in $C^{4}(\overline{\Omega})$.
\end{theorem}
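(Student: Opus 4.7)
The plan is to construct a positive weak solution via the sub/super-solution method, establish uniqueness via the monotonicity of the nonlinearity, and then bootstrap regularity to $C^4(\overline\Omega)$ using standard linear elliptic theory. Under condition $(\cE)$, the nonlinearity
\[
F(U,x) := \widetilde\Sigma_a(|U|)U = \sum_{j=0}^n \Sigma_{a,j}(x)|U|^j U
\]
is strictly monotonically increasing in $U \in \bbR$, and vanishes only at $U=0$.

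For existence, the constant $\overline{U} := \|f_0\|_{L^\infty(\partial\Omega)}$ is a classical supersolution to \eqref{eq: nonlinear diff} since $\widetilde\Sigma_a(\overline U)\overline U \ge 0$, while $\underline{U} := 0$ is a subsolution because the nonlinearity vanishes and $0 \le f_0$ on $\partial\Omega$. Sattinger's monotone iteration then yields a weak solution $\widetilde U \in H^1(\Omega)\cap L^\infty(\Omega)$ with $0 \le \widetilde U \le \overline U$. Writing the equation as $-\nabla\cdot(a\nabla \widetilde U) + V(x)\widetilde U = 0$ with $a := 1/(d\Sigma_s)$ and $V := \widetilde\Sigma_a(\widetilde U) \ge 0$, the strong maximum principle together with $f_0 > 0$ (from $(\cE)$) forces $\widetilde U > 0$ throughout $\overline\Omega$.

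For uniqueness, if $U_1,U_2$ are two nonnegative weak solutions, then $\delta := U_1 - U_2$ satisfies the linear equation
\[
-\nabla\cdot\Bigl(\frac{1}{d\Sigma_s}\nabla\delta\Bigr) + c(x)\delta = 0,\qquad \delta|_{\partial\Omega}=0,
\]
with $c(x) := \frac{F(U_1,x) - F(U_2,x)}{U_1 - U_2} \ge 0$ (the divided difference of the monotone function $s \mapsto F(s,x)$ on $[0,\infty)$, extended by continuity). Testing with $\delta$ and invoking Poincar\'e yield $\delta \equiv 0$.

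For the regularity bootstrap, view the equation linearly as $-\nabla\cdot(a\nabla \widetilde U) = -F(\widetilde U,\cdot) \in L^\infty$. Since $a \in C^{k,1}(\overline\Omega)$, $\partial\Omega \in C^{k+2}$, and the trace $f_0|_{\partial\Omega}$ lies in $W^{2-1/p,p}(\partial\Omega)$ for every $p < \infty$ (by $f_0 \in H^{k+2}(\bbR^d)$ and boundary trace theory), the Gilbarg-Trudinger $W^{2,p}$ estimate gives $\widetilde U \in W^{2,p}(\Omega)$ for every $p < \infty$; Sobolev embedding with $p > d$ then yields $\widetilde U \in C^{1,\alpha}(\overline\Omega)$. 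The right-hand side $F(\widetilde U,\cdot)$ is thus in $C^{1,\alpha}$, and boundary Schauder estimates upgrade $\widetilde U$ to $C^{3,\alpha}(\overline\Omega)$; one more iteration, using $a \in C^{3,\alpha}$ (valid since $k > d/2 + 2 \ge 3$) together with $f_0 \in C^{4,\alpha}(\overline\Omega)$ via $H^{k+2} \hookrightarrow C^4$ under the Sobolev exponent $k > d/2 + 2$, delivers $\widetilde U \in C^{4,\alpha}(\overline\Omega) \subset C^4(\overline\Omega)$. The main obstacle lies in this last step: although each estimate is standard, one must carefully track the regularity of $\Sigma_s$, the $\Sigma_{a,j}$, $\partial\Omega$, and $f_0$ through each iteration to ensure the boundary Gilbarg-Trudinger and Schauder estimates apply up to $\partial\Omega$. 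The assumption $k > d/2 + 2$ in $(\cE)$ is precisely calibrated for this, while existence and uniqueness follow routinely from the monotone structure of the nonlinearity.
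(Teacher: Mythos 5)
Your argument is correct, but it organizes the proof differently from the paper. For existence and the $L^\infty$/positivity bounds you use sub- and supersolutions ($\underline U=0$, $\overline U=\sup f_0$) with monotone iteration and the strong maximum principle; the paper instead gets existence and uniqueness in $H^1$ from the variational method (the monotone nonlinearity makes the energy strictly convex) and obtains the crucial lower bound $\widetilde U\ge\theta>0$ by comparison with the solution $\widetilde V$ of an auxiliary \emph{linear} problem with absorption $\widetilde\Sigma_a(\sup_{\partial\Omega}f_0)$. For regularity, the paper truncates the nonlinearity with a smooth cutoff $\chi$ supported where $\theta\le s\le 2\sup f_0$, so that the modified nonlinearity is globally smooth and the bootstrap can be run entirely in the $H^m$ scale up to $H^{k+2}\hookrightarrow C^4$, after which the comparison principle shows the cutoff is inactive and the truncated solution solves the original equation. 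You instead bootstrap via $W^{2,p}$ and Schauder estimates; this works because on the range $[0,\sup f_0]$ of the solution the nonlinearity $\sum_j\Sigma_{a,j}|U|^jU=\sum_j\Sigma_{a,j}U^{j+1}$ is polynomial, so no cutoff is needed once nonnegativity is in hand (strict positivity, which you also prove, is not even required for this step). Two minor points to tighten: (i) your uniqueness argument restricts to nonnegative solutions, but $s\mapsto|s|^js$ is increasing on all of $\bbR$, so the same divided-difference argument gives uniqueness among all $H^1$ solutions, matching the paper's claim; (ii) invoking the strong maximum principle on the weak solution presupposes interior continuity, so formally one should run the first (De Giorgi--Nash--Moser or $W^{2,p}$) regularity step before concluding $\widetilde U>0$ on $\overline\Omega$. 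Your closing Schauder iteration count is slightly informal but the hypothesis $k>\tfrac{d}{2}+2$ does supply the needed $C^{3,1}$ coefficients and $C^{4,\alpha}$ boundary datum, so the conclusion $\widetilde U\in C^4(\overline\Omega)$ follows as claimed.
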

\begin{proof}
    The uniqueness of the solution $\widetilde{U}\in H^1(\Omega)$ is guaranteed by the usual variational method. To show that $\widetilde{U}$ has higher regularity, we use the idea in~\cite{taylor1996partial} and consider the modified equation 
\begin{equation}\label{eq: nonlinear diff map}
\begin{aligned}
    \nabla \left(\frac{1}{d\Sigma_{s}}\nabla \widetilde{W}\right) - \widetilde{\Sigma}_{a}(\widetilde{W}) \chi(\widetilde{W}) \widetilde{W} &= 0,\quad &\text{ in }&\Omega, \\\widetilde{W}(x) &=f_0, \quad &\text{ on }&\partial\Omega.
\end{aligned}
\end{equation}
where $\chi$ is a $C^{\infty}$ cutoff function that $\chi(x)=1$ if $0 < \theta \le x \le \sup_{\partial\Omega} f_0$, and $\chi(x) = 0$ for $x < 0$ or $x > 2 \sup_{\partial\Omega} f_0$. The parameter $\theta = \inf_{\Omega} \widetilde{V} > 0$ is the minimum of the solution $\widetilde{V}\in C^{4}(\overline{\Omega})$ to the following linear elliptic equation:
\begin{equation}\label{eq: nonlinear diff map 2}
\begin{aligned}
    \nabla \left(\frac{1}{d\Sigma_{s}}\nabla \widetilde{V}\right) - \widetilde{\Sigma}_{a}(\sup_{\partial\Omega} f_0) \widetilde{V} &= 0,\quad &\text{ in }&\Omega, \\\widetilde{V}(x) &=f_0, \quad &\text{ on }&\partial\Omega.
\end{aligned}
\end{equation}
Then, the modified equation~\eqref{eq: nonlinear diff map} admits a unique solution $\widetilde{W}\in H^1(\Omega)$, and we have $\widetilde{\Sigma}_{a}(\widetilde{W}) \chi(\widetilde{W}) \widetilde{W}\in H^1(\Omega)$, which implies $\widetilde{W}\in H^2(\Omega)$ since the boundary condition is sufficiently regular. This inductively pumps the regularity up till $\widetilde{W}\in H^{k+2}(\Omega)$. Hence $\widetilde{W}\in C^{4}(\overline{\Omega})$ by Sobolev embedding. Then, by comparison principle, $\theta \le \widetilde{W} \le \sup_{\partial \Omega} f_0$. This makes $\widetilde{W}$ the solution to~\eqref{eq: nonlinear diff}.
\end{proof}
With the diffusion approximation error $|\widetilde{u}(x, v) - \widetilde{U}(x)| = \cO(\eps)$ and $\widetilde{U} > 0$ is uniformly bounded from below, we can find the estimate $\widetilde{u}(x, v)/ \aver{\widetilde u}(x) \le 1 + \widetilde{C} \eps$ for a certain $\widetilde{C} > 0$. The following theorem closes the gap in stability estimates between the diffusion and transport regimes under certain conditions.

\begin{theorem}
  Under the conditions $(\cA)$, $(\cD)$, and $(\cE)$, assume that $H_{\eps}$ and $\widetilde{H}_{\eps}$ are data associated to the scaled $\Sigma_{a,\eps}$ and $\widetilde{\Sigma}_{a,\eps}$, respectively. If $H_{\eps} = \widetilde{H}_{\eps}$, then $\Sigma_{a,\eps}( \aver{u}) = \widetilde{\Sigma}_{a,\eps}( \aver{\widetilde{u}})$. Moreover, there exists a constant $C > 0$ independent of $\eps$ that
  \begin{equation*}
     \left\| \Psi_{\eps} \frac{\Sigma_{a,\eps} (\aver{u})- \widetilde{\Sigma}_a(\aver{\widetilde{u}})}{\Sigma_{a,\eps}( \aver{u})} \aver{\widetilde{u}}\right\|_{L^1(\Omega)} \le C \left\|\Psi_{\eps}\frac{H_{\eps} - \widetilde{H}_{\eps}}{\Sigma_{a,\eps}( \aver{u})}\right\|_{L^1(\Omega)},
\end{equation*}
where $(\mu_{\eps}, \Psi_{\eps})$ is the leading eigenpair of $\cP_{\eps}$ with nonlinear absorption $\Sigma_{a}=\Sigma_a(\aver{u})$.
\end{theorem}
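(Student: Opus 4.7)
My plan is to split the stability estimate into two parts: an algebraic identity produced by the datum, and a transport-side estimate that exploits the spectral gap of the Peierls operator $\cP_{\eps}$ through a weighted duality with its principal eigenfunction $\Phi_{\eps}$. Writing $\delta u := u - \widetilde u$ and $\delta\Sigma_a := \Sigma_{a,\eps}(\aver u) - \widetilde\Sigma_{a,\eps}(\aver{\widetilde u})$, the identity $H_{\eps} - \widetilde H_{\eps} = \Sigma_{a,\eps}(\aver u)\aver{\delta u} + \delta\Sigma_a\aver{\widetilde u}$ yields, after dividing by $\Sigma_{a,\eps}(\aver u)$, weighting by $\Phi_\eps$, and integrating,
\[
\Bigl\|\Phi_\eps\frac{|\delta\Sigma_a|\aver{\widetilde u}}{\Sigma_{a,\eps}(\aver u)}\Bigr\|_{L^1(\Omega)} \le \Bigl\|\Phi_\eps\frac{|H_\eps - \widetilde H_\eps|}{\Sigma_{a,\eps}(\aver u)}\Bigr\|_{L^1(\Omega)} + \int_\Omega \Phi_\eps\aver{|\delta u|}\,dx.
\]
The task is thus reduced to bounding $\int_\Omega\Phi_\eps\aver{|\delta u|}\,dx$ by the data discrepancy with a constant independent of $\eps$. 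Uniqueness will then follow immediately: if $H_\eps = \widetilde H_\eps$, the vanishing of the left-hand side together with the strict positivity of $\Phi_\eps$ (Krein-Rutman) and of $\aver{\widetilde u}$ (Lemma~\ref{lem: bound}) forces $\delta\Sigma_a \equiv 0$.

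For the transport-side control, the isotropic-scattering assumption $(\cD)$ lets me linearize: $\delta u$ satisfies $v\cdot\nabla\delta u + \Sigma_{t,\eps}\delta u - \Sigma_{s,\eps}\aver{\delta u} = -\delta\Sigma_a\widetilde u$ with zero inflow. Solving along characteristics, taking absolute values, averaging in $v$, and applying the diffusion-approximation bound $\widetilde u(x,v)\le(1+\widetilde C\eps)\aver{\widetilde u}(x)$ noted just before the theorem, I obtain the Peierls-type inequality
\[
\aver{|\delta u|}(x) \le (1+\widetilde C\eps)\,\cP_\eps\!\Bigl(\frac{|\delta\Sigma_a|\aver{\widetilde u}}{\Sigma_{t,\eps}}\Bigr)(x) + \cP_\eps\!\Bigl(\frac{\Sigma_{s,\eps}\aver{|\delta u|}}{\Sigma_{t,\eps}}\Bigr)(x).
\]
Because the kernel of $\cP_\eps$ factorizes as a symmetric kernel times $\Sigma_{t,\eps}(y)$, the operator is self-adjoint on $L^2(\Omega;\Sigma_{t,\eps}\,dx)$, so $\int_\Omega\Sigma_{t,\eps}\Phi_\eps\cdot\cP_\eps g\,dx = \rho\int_\Omega\Sigma_{t,\eps}\Phi_\eps\cdot g\,dx$ with $\rho := \rho(\cP_\eps)$. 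Pairing the inequality above against $\Sigma_{t,\eps}\Phi_\eps$ and using $\Sigma_{t,\eps} - \rho\Sigma_{s,\eps} = \Sigma_{a,\eps} + (1-\rho)\Sigma_{s,\eps}$ yields
\[
\int_\Omega \Phi_\eps\bigl[\Sigma_{a,\eps} + (1-\rho)\Sigma_{s,\eps}\bigr]\aver{|\delta u|}\,dx \le \rho(1+\widetilde C\eps)\int_\Omega \Phi_\eps|\delta\Sigma_a|\aver{\widetilde u}\,dx.
\]

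To turn this into the target bound I would invoke the spectral gap. The nonlinear diffusion approximation gives $\aver u = \widetilde U + \cO(\eps)$ in $L^\infty$ with $\widetilde U\in C^4(\overline\Omega)$ supplied by Theorem~\ref{Thm: reg}; the polynomial structure of $\Sigma_a$ in $(\cE)$ then makes $\Sigma_{a,\eps}(\aver u)$ an $\cO(\eps)$ $L^\infty$-perturbation of the smooth $\Sigma_{a,\eps}(\widetilde U)\in C^{k,1}(\overline\Omega)$, so Corollary~\ref{cor: eigen} combined with Theorem~\ref{Thm: eigen} gives $1-\rho = \Theta(\eps^2)$. Together with $\Sigma_{s,\eps} = \eps^{-1}\Sigma_s$, this produces $(1-\rho)\Sigma_{s,\eps} = \Theta(\eps)$ uniformly and hence $\Sigma_{a,\eps} + (1-\rho)\Sigma_{s,\eps} \gtrsim \eps$. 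On the right-hand side I substitute $|\delta\Sigma_a|\aver{\widetilde u} \le |H_\eps - \widetilde H_\eps| + \Sigma_{a,\eps}\aver{|\delta u|}$; the cross term $\rho(1+\widetilde C\eps)\Sigma_{a,\eps}\aver{|\delta u|}$ carries only a weight of size $\cO(\eps^2)$, strictly smaller than the $\Theta(\eps)$ lower bound on the left, and is therefore absorbable for small $\eps$. Cancelling the common factor of $\eps$ delivers $\int_\Omega\Phi_\eps\aver{|\delta u|}\,dx \le C\|\Phi_\eps(H_\eps - \widetilde H_\eps)/\Sigma_{a,\eps}(\aver u)\|_{L^1}$ with $C$ independent of $\eps$; combining with the opening identity proves the theorem.

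The hardest part is precisely this final bookkeeping: three distinct $\eps$-scales must cancel simultaneously—the spectral gap $1-\rho=\Theta(\eps^2)$, the diffusion-approximation defect of order $\eps$, and the Knudsen scalings $\Sigma_{a,\eps}=\Theta(\eps)$ and $\Sigma_{s,\eps}=\Theta(\eps^{-1})$. The cancellation is made possible by the key observation that the effective lower weight $(1-\rho)\Sigma_{s,\eps}$ is of order $\eps$ rather than the naively expected $\eps^2$; without that margin, the anisotropic defect of $\widetilde u$ from its angular average would spoil the absorption step and prevent a uniform constant.
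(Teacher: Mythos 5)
Your proposal is correct and follows essentially the same route as the paper: the data identity, the Duhamel/Peierls representation of $\delta u$, duality against the principal eigenfunction of $\cP_\eps$ with $1-\rho(\cP_\eps)=\Theta(\eps^2)$ (via Theorem~\ref{Thm: eigen} and Corollary~\ref{cor: eigen}), the anisotropy bound $\widetilde u\le(1+\widetilde C\eps)\aver{\widetilde u}$, and the final $\Theta(\eps)$ cancellation of scales; you merely reorder the two triangle inequalities, isolating $\int_\Omega\Phi_\eps\aver{|\delta u|}\,dx$ first rather than working directly with $|\delta\Sigma_a|\aver{\widetilde u}/\Sigma_{a,\eps}$. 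One phrase is imprecise: the cross term $\rho(1+\widetilde C\eps)\Sigma_{a,\eps}\aver{|\delta u|}$ itself has weight $\Theta(\eps)$, not $\cO(\eps^2)$ --- it is the residual $\Sigma_{a,\eps}\bigl(1-\rho(1+\widetilde C\eps)\bigr)$, left after subtracting it from the matching $\Sigma_{a,\eps}$ on the other side, that is $-\Theta(\eps^2)$ and hence dominated by $(1-\rho)\Sigma_{s,\eps}=\Theta(\eps)$, exactly the mechanism your closing paragraph correctly identifies.
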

\begin{proof}
Recall that the perturbation $\delta u:=u-\widetilde u$ satisfies the transport equation
\begin{equation*}
  \begin{aligned}
         v\cdot \nabla \delta u + \Sigma_{t,\eps}(x) \delta u - \Sigma_{s,\eps}(x)\aver{\delta u} &= - \delta\Sigma_{a,\eps} \widetilde{u} \quad &&\text{ in }X, \\
         \delta u &= 0\quad &&\text{ on }\Gamma_{-}.
  \end{aligned}
\end{equation*}
Moreover, from the data, we have $H_{\eps} - \widetilde{H}_{\eps} = \Sigma_a( \aver{u}) \aver{\delta u} + \delta\Sigma_a \aver{\widetilde{u}}$. First, we have the following lower bound, which is similar to~\eqref{eq: less} in Theorem~\ref{thm: pat}:
\begin{equation}\label{eq: less 2}
     \Sigma_{t,\eps}|\aver{\delta u}| \ge \mu_{\eps} {\Sigma_{s,\eps}}|\aver{\delta u}| + \left( {\Sigma_{a,\eps} + (1 - \mu_{\eps})\Sigma_{s,\eps}}\right) \left( \left|\frac{\delta\Sigma_{a,\eps}}{\Sigma_{a,\eps}( \aver{u})} \aver{\widetilde{u}}\right| - \left|\frac{H_{\eps} - \widetilde{H}_{\eps}}{\Sigma_{a,\eps}( \aver{u})}\right| \right).
\end{equation}
For the upper bound, we can use $|\aver{\delta u}|$ instead of $\aver{|\delta u|}$ due to the isotropic scattering assumption we made. This leads to
\begin{equation*}
\begin{aligned}
  |\aver{\delta u}(x)| &= \left| \int_{\bbS^{d-1}} \delta u(x, v) dv \right| \\ &=  \left| \int_{\bbS^{d-1}}  \int_0^{\tau_{-}(x, v)} E(x, x-sv)   \left(- \delta\Sigma_a(x-sv) \widetilde{u}(x-sv, v) + \Sigma_s \aver{\delta u} (x-sv)\right) ds  dv  \right| \\
  &\le \left| \int_{\bbS^{d-1}} \int_0^{\tau_{-}(x, v)} E(x, x-sv)   \delta\Sigma_a(x-sv) \widetilde{u}(x-sv, v) ds dv \right| \\& \quad + \left| \int_{\bbS^{d-1}} \int_0^{\tau_{-}(x, v)} E(x, x-sv)  \Sigma_s \aver{\delta u} (x-sv) ds dv \right|.
\end{aligned}
\end{equation*}
Multiply the above inequality with $\Sigma_{t,\eps} \Psi_{\eps}$ and integrate, 
\begin{equation*}
 \| \Psi_{\eps} \Sigma_{t,\eps} \aver{\delta u} \|_{L^1(\Omega)} \le M_1 + M_2.
\end{equation*}
where $M_1$ and $M_2$ are defined as follows.
\begin{equation*}
\begin{aligned}
    M_1 &= \left| \int_{\Omega} \Sigma_{t,\eps}(x) \Psi_{\eps}(x) \int_{\bbS^{d-1}}\int_0^{\tau_{-}(x,v)} E_{\eps}(x, x-sv) \delta \Sigma_{a,\eps} (x-sv) \widetilde{u}(x-sv, v) ds dv dx \right| \\
    M_2 &=  \left| \int_{\Omega} \Sigma_{t,\eps}(x) \Psi_{\eps}(x) \cP_{\eps} \frac{\Sigma_{s,\eps}}{\Sigma_{t,\eps}}\aver{\delta u}(x) dx \right|.
\end{aligned}
\end{equation*}
Applying the Fubini theorem to $M_2$, we find that
\begin{equation*}
\begin{aligned}
M_2 &= \left| \int_{\Omega} \int_{\Omega} \Sigma_{t,\eps}(x) \Psi_{\eps}(x) \frac{E_{\eps}(x, y)}{|x - y|^{d-1}} \Sigma_{s,\eps}(y) \aver{\delta u}(y) dx d y \right| \\&=  \left| \int_{\Omega}  \Sigma_{s,\eps}(x)\aver{\delta u}(x) \int_{\Omega} \frac{E_{\eps}(x, y)}{|x - y|^{d-1}} \Sigma_{t,\eps}(y) \Psi_{\eps}(y) dy dx \right|  \\&= \mu_{\eps} \left| \int_{\Omega} \Psi_{\eps}(x)   \Sigma_{s,\eps}(x)\aver{\delta u}(x) dx\right| \le \mu_{\eps}  \int_{\Omega} \Psi_{\eps}(x)   \Sigma_{s,\eps}(x)|\aver{\delta u}(x)| dx.
\end{aligned}
\end{equation*}
Note that this upper bound of $M_2$ matches the first term in the lower bound~\eqref{eq: less 2} after multiplying $\Psi_{\eps}$.
To estimate $M_1$, we have the following upper bound:
\begin{equation*}
\begin{aligned}
    M_1 &\le (1 + \widetilde{C}\eps) \int_{\Omega} \Sigma_{t,\eps}(x) \Psi_{\eps}(x) \int_{\bbS^{d-1}}\int_0^{\tau_{-}(x,v)} E_{\eps}(x, x-sv) |\delta \Sigma_{a,\eps} (x-sv)| \aver{\widetilde{u}(x-sv)} ds dv dx\\
    &= (1 + \widetilde{C}\eps) \int_{\Omega} \int_{\Omega} \Sigma_{t,\eps}(x) \Psi_{\eps}(x) \frac{E_{\eps}(x, y)}{|x - y|^{d-1}} |\delta\Sigma_{a,\eps}(y)| \aver{\widetilde u}(y) dx d y \\
    &= (1 + \widetilde{C}\eps) \mu_{\eps} \int_{\Omega} \Psi_{\eps}(x) |\delta\Sigma_{a,\eps}(x)| \aver{\widetilde u}(x) dx,
\end{aligned} 
\end{equation*}
where we used the Fubini theorem and $(\mu_{\eps}, \Psi_{\eps})$ is an eigenpair for $\cP_{\eps}$.
Combining this with the lower bound~\eqref{eq: less 2} multiplied by $\Psi_{\eps}$, we can cancel the first term in the lower bound with $M_2$, then obtain 
\begin{equation}\label{EQ: KEY INEQ}
\begin{aligned}
&\int_{\Omega} \Psi_{\eps}(x) \left( {\Sigma_{a,\eps} (1 - (1 + \widetilde{C}\eps) \mu_{\eps} ) + (1 - \mu_{\eps})\Sigma_{s,\eps}}\right) \left|\frac{\delta\Sigma_{a,\eps}}{\Sigma_{a,\eps}( \aver{u})} \aver{\widetilde{u}}\right| dx  \\& \le     \int_{\Omega} \Psi_{\eps}(x) \left( {\Sigma_{a,\eps} + (1 - \mu_{\eps})\Sigma_{s,\eps}}\right)  \left|\frac{H_{\eps} - \widetilde{H}_{\eps}}{\Sigma_{a,\eps}( \aver{u})}\right| dx.
\end{aligned}
\end{equation}
According to Theorem~\ref{Thm: reg}, the diffusion approximation for $u(x, v)$ satisfies $U\in C^4(\overline{\Omega})$, and $|\Sigma_a(U) - \Sigma_a(\aver{u})|=\cO(\eps)$ due to boundedness. By the Corollary~\ref{cor: eigen}, the eigenvalue $\mu_{\eps} = 1 - \Theta(\eps^2)$. We find that the scaling on the left-hand side of~\eqref{EQ: KEY INEQ} is (in big $\Theta$ notation)
\begin{equation*}
     {\Sigma_{a,\eps} (1 - (1 + \widetilde{C}\eps) \mu_{\eps} ) + (1 - \mu_{\eps})\Sigma_{s,\eps}} = - \Theta(\eps) \Sigma_{a,\eps} + \Theta(\eps^2) \Sigma_{s,\eps}  = \Theta(\eps).
\end{equation*}
On the right-hand side of~\eqref{EQ: KEY INEQ}, the scaling is 
\begin{equation*}
    {\Sigma_{a,\eps} + (1 - \mu_{\eps})\Sigma_{s,\eps}} =\Theta(\eps)\Sigma_{a} + \Theta(\eps) \Sigma_{s} = \Theta(\eps).
\end{equation*}
Therefore, the scaling factors cancel, and the stability becomes 
\begin{equation*}
     \left\| \Psi_{\eps} \frac{\delta\Sigma_{a,\eps}}{\Sigma_{a,\eps}( \aver{u})} \aver{\widetilde{u}}\right\|_{L^1(\Omega)} \le C \left\|\Psi_{\eps}\frac{H_{\eps} - \widetilde{H}_{\eps}}{\Sigma_{a,\eps}( \aver{u})}\right\|_{L^1(\Omega)},
\end{equation*}
where $C$ is independent of $\eps$.
\end{proof}

\begin{remark}
The weight function $\Psi_{\eps}$ is small near the boundary, which penalizes the deviation of the solution from its diffusion approximation in the boundary layer. If $\delta\Sigma_{a,\eps} = \widetilde{\Sigma}_{a,\eps}(\aver{\widetilde{u}}) -  \Sigma_{a,\eps}(\aver{u})$ and $H_{\eps} - \widetilde{H}_{\eps}$ are both supported at $\Theta(1)$ distance from the boundary,
then the unweighted $L^1$ stability still applies.
\end{remark}


\section{Concluding remarks}

This work studies forward and inverse problems of a semilinear radiative transport equation with a nonlinear absorption coefficient. We established the well-posedness of the model without assuming the smallness of the illumination source. This theory is closer to the situation in practical applications, where nonlinear effects are only significant with strong boundary data. For the inverse problem of reconstructing the nonlinear absorption coefficient in photoacoustic imaging, we have lifted the restrictions in~\cite{ReZh-SIAM21} by establishing an $L^1$ theory. By analyzing the spectral radius of the Peierls integral operator, we proved that the stability constants can be unified for both diffusion and transport regimes by introducing a weighted $L^1$ norm with the eigenfunction that penalizes the anisotropy near the boundary.

It is not entirely clear whether the weight function $\Psi_{\eps}$ is necessary for our attempt to unify the stability results in the transport and diffusion regimes. At a superficial level, the stability in the diffusive regime does not depend on such a weight. However, this might be due to the fact that the boundary layer effect is not taken care of by the diffusion model with the Dirichlet boundary condition in~\eqref{eq: diff}. It would be interesting to see whether a weight is also needed in the diffusive regime with Robin-type boundary conditions to account for the boundary-layer effect in the derivation of the diffusion approximation.



\section*{Acknowledgment}

We would like to thank the anonymous referees for their constructive comments that helped us improve the quality of this paper. This work is partially supported by the National Science Foundation through grants DMS-2309530 (YZ) and DMS-2309802 (KR). KR also acknowledges support from the Gordon \& Betty Moore Foundation through award GBMF12801.

\bibliographystyle{siam}
\bibliography{main}
\end{document}